\newtheorem{thm}{Theorem}[section]
\newtheorem{prob}{Problem}[section]
\newtheorem{lem}{Lemma}[section]
\begin{document}
\title{ Spectral extremal problem on the square of $\ell$-cycle\footnote{Supported by
the APNSF grants, China (Nos. 2108085MA13, 2022AH040151).}}
\author{{\bf Longfei Fang$^{a,b}$}, {\bf Yanhua Zhao$^a$}\thanks{Corresponding author: yhua030@163.com
(Y. Zhao).} \\
\small $^{a}$ School of Mathematics, East China University of Science and Technology, \\
\small  Shanghai 200237, China\\
\small $^{b}$ School of Mathematics and Finance, Chuzhou University, \\
\small  Chuzhou, Anhui 239012, China\\
}

\date{}
\maketitle
{\flushleft\large\bf Abstract}
Let $C_{\ell}$ be the cycle of order ${\ell}$. The square of $C_{\ell}$, denoted by $C_{\ell}^2$, is obtained by joining all pairs of vertices with distance no more than two in $C_{\ell}$.
A graph is called $F$-free if it does not contain $F$  as a subgraph.
Denote by $ex(n,F)$ and $spex(n,F)$ the maximum size and spectral radius over all $n$-vertex $F$-free graphs, respectively.
The well-known  Tur\'{a}n problem asks for the $ex(n,F)$,  and  Nikiforov in 2010  proposed a spectral counterpart, known as Brualdi-Solheid-Tur\'{a}n type problem, focusing on determining $spex(n,F)$.
In this paper, we consider a Tur\'{a}n problem on $ex(n,C_{\ell}^2)$  and a
Brualdi--Solheid--Tur\'{a}n type problem on $spex(n,C_{\ell}^2)$.
We give a sharp bound of $ex(n,C_{\ell}^2)$ and $spex(n,C_{\ell}^2)$ for sufficiently large $n$, respectively.
Moreover, in both results, we characterize the corresponding  extremal graphs for any integer $\ell\geq 6$ that is not divisible by $3$.
\begin{flushleft}
\textbf{Keywords:} Extremal graph; Spectral radius; $C_{\ell}^2$-free
\end{flushleft}
\textbf{AMS Classification:} 05C35; 05C50

\section{Introduction}
 All graphs considered here are  undirected, finite and simple.
Let $G$ be a graph with vertex set $V(G)$ and edge set $E(G)$.
Denote by  $|G|$  the order, $e(G)$ the size, $\nu(G)$ the matching number, $\chi(G)$ the chromatic number, $\delta(G)$ the minimum degree and $\Delta(G)$ the maximum degree, respectively.
For $u,v \in V (G)$,  the distance between $u$ and $v$ is the length of a shortest path between $u$ and $v$.
Let $G^k$ be the $k$-power of a graph $G$, which is obtained by joining all pairs of vertices with distance at most $k$ in $G$. Specially,   the $2$-power of a graph $G$ is also called the square of $G$.
We write $C_l^k$ and $P_l^k$ for the $k$-power of a cycle $C_l$ and a path $P_l$ on $l$ vertices, respectively.

 In this article, we investigate the edge condition and spectral condition for existence of $C_\ell^2$. The study of $C_\ell^k$ has a rich history in graph theory. In 1952, Dirac proved that if $\delta(G)\ge \frac{n}{2}$, then $G$ contains a $C_n$. As a generalization of Dirac's theorem,  P\'{o}sa \cite{Erdos1} in 1963 proposed a conjecture that  if $\delta(G)\ge \frac{2n}{3}$, then $G$ contains a $C_n^2$.
Later,  Seymour \cite{Seymour} in 1974 generalized  P\'{o}sa's conjecture and gave the following conjecture, known as ``P\'{o}sa--Seymour Conjecture", that if $G$ is a connected graph with $\delta(G)\ge \frac{kn}{k+1}$, then $G$ contain a $C_n^k$.
Both of two conjectures  aroused much interest of many scholars, and there were numerous  results on them. As we know, Ch\^{a}u, DeBiasio and Kierstead \cite{CDK} gave the freshest result on P\'{o}sa's conjecture for $n\ge 2\times 10^8,$ and Koml\'{o}s, S\'{a}rk\"{o}zy  and  Szemer\'{e}di \cite{2KSS1998}   verified   P\'{o}sa--Seymour Conjecture  for large $n$.  
For more results, we refer the reader to \cite{CDK,FK1995,FK1996,2FK1996,KSS1996,KSS1998,2KSS1998,LSS2010,RRS,Seymour} and their reference therein.
%
%

Given a graph $H$, we call a graph \emph{$H$}-free
if it does not contain any copy of $H$ as a subgraph. 
Denote by $ex(n,H)$ the maximum number of edges in any $H$-free graph of order $n$.
An $H$-free graph on $n$ vertices with $ex(n,H)$ edges is called an  \emph{extremal graph}  with respect to $ex(n,H)$.
The classic Tur\'{a}n's problem, which focuses on determining $ex(n,H)$, is one of the central problems in extremal graph theory.
Tur\'{a}n's problem and many kinds of its variations have been paid much attention and a considerable number of influential results in extremal graph theory have been obtained (see for example, a survey \cite{FS2013}).
In particular,  Ore \cite{O1961} in 1961 determined $ex(n,C_n)$. Recently, Khan and Yuan \cite{KY2022}  gave $ex(n,C_n^2)$.  The well--known Mantel Theorem showed  $ex(n,C_{3})$, and Zoltan and David \cite{FG-2015} described $ex(n,C_{2l+1})$ for $l\ge 2$. The freshest result on even cycle $ex(n,C_{2l+2})$ was proved in \cite{HS2021}. Moreover,  for each $\ell\in \{3,4,5\}$,  we see that $C_{\ell}^2\cong K_{\ell}$, and so   $ex(n,C_{\ell}^2)$  was determined by Tur\'{a}n \cite{Turan} in a more general context. Therefore, it is a natural wish to consider the following problem.
\begin{prob}\label{prob-01}
For any integer $\ell\geq 6$, what is the exact value of $ex(n,C_l^2)$?
\end{prob}
In this paper, we first give an answer to Problem \ref{prob-01}.
Given two graphs $G$ and $H$, let $G\cup H$ be the disjoint union of $G$ and $H$, and $G+H$ be the  graph obtained from  $G\cup H$ by adding all edges between $G$ and  $H.$ Denote by $T_{n,r}$ the \emph{Tur\'{a}n graph}, which  is the complete $r$-partite graph on $n$ vertices
where each partite set has either $\lceil\frac{n}{r}\rceil$ or $\lfloor\frac{n}{r}\rfloor$ vertices.
For short, we set $G(n)=K_1+T_{n-1,3}$.
 \begin{thm}\label{theorem1.03}
Let $\ell\geq 6$ and $G$ be an $n$-vertex $C_{\ell}^2$-free graph for sufficiently large $n$.
Then  $e(G)\leq e(G(n))$, with equality if and only if $G\cong G(n)$ and $\ell=3k+2$ for some integer $k\geq 2$.
\end{thm}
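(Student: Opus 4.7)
The plan is to first identify exactly when $G(n)$ is $C_\ell^2$-free, then prove the universal bound $e(G)\le e(G(n))$ via a stability argument. For the first step I would analyse any hypothetical copy of $C_\ell^2$ inside $G(n)=K_1+T_{n-1,3}$: either the copy avoids the apex, in which case $C_\ell^2$ embeds in the 3-chromatic graph $T_{n-1,3}$ and we need $3\mid \ell$; or the apex plays the role of some vertex $v_i$ and the remaining $\ell-1$ vertices embed into $T_{n-1,3}$. The graph $C_\ell^2-v_i$ is the squared path on $\ell-1$ vertices together with an extra chord joining its two endpoints, and any proper 3-colouring of a squared path is forced into the cyclic pattern $ABCABC\ldots$; the extra chord then conflicts with this pattern exactly when $\ell\equiv 2\pmod 3$. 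Hence $G(n)$ is $C_\ell^2$-free iff $\ell=3k+2$, giving both the construction and one direction of the equality characterisation.

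For the upper bound, the case $\ell=3k$ is immediate: since $\chi(C_\ell^2)=3$, Erd\H{o}s--Stone--Simonovits yields $e(G)\le\bigl(\tfrac12+o(1)\bigr)\binom{n}{2}$, strictly less than $e(G(n))\sim n^2/3$ for large $n$. The main case is $\ell\not\equiv 0\pmod 3$, where $\chi(C_\ell^2)=4$ and $ex(n,C_\ell^2)$ matches $e(G(n))$ asymptotically. Here I would invoke Erd\H{o}s--Simonovits stability: assuming $e(G)\ge e(G(n))$, extract a partition $V(G)=V_1\cup V_2\cup V_3$ with $|V_i|=(1+o(1))n/3$ in which all but $o(n^2)$ edges cross parts. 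Call a vertex \emph{bad} if its degree inside its own $V_i$ is unusually large. The key embedding lemma is that two bad vertices, together with the cross-edge density supplied by stability, already suffice to build a copy of $C_\ell^2$: one extends a partial sequence $v_1,v_2,\ldots,v_\ell$ greedily through the three colour classes in the forced cyclic pattern and closes the cycle on the two bad vertices. Consequently $G$ contains at most one bad vertex $v^*$; after deleting $v^*$ the remainder is within $o(n^2)$ edges of $T_{n-1,3}$, and all "excess" edges of $G$ over $T_{n-1,3}$ are absorbed into the star of $v^*$, contributing at most $n-1$. This yields $e(G)\le e(T_{n-1,3})+(n-1)=e(G(n))$.

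Equality then forces $v^*$ to be joined to every other vertex and $G-v^*\cong T_{n-1,3}$, i.e., $G\cong G(n)$; and since $G(n)$ itself contains $C_\ell^2$ whenever $\ell\equiv 1\pmod 3$ (the apex route works there because $C_\ell^2-v$ becomes 3-colourable), equality under the $C_\ell^2$-free hypothesis forces $\ell=3k+2$. The main obstacle I anticipate is the embedding lemma: producing $C_\ell^2$ from two bad vertices in a near-3-partite host needs a P\'{o}sa-type extension argument that branches on the residue of $\ell$ modulo $3$, since the cyclic closure behaves differently in the $\ell=3k+1$ and $\ell=3k+2$ cases. A secondary hurdle is the sharp counting near $v^*$: stability only yields the crude bound $(1+o(1))n$ on the excess edges, whereas we need exactly $n-1$, which will likely require a residual stability argument applied to $G-v^*$.
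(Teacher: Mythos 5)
Your chromatic analysis is correct and is exactly the content of the paper's Lemma \ref{lemma2.004}: the forced cyclic $3$-colouring of $P_{\ell}^2$, the observation that $C_\ell^2-v$ is $P_{\ell-1}^2$ plus the chord $v_1v_{\ell-1}$, and the resulting case split by $\ell \bmod 3$ all match. Where you diverge is the extremal step: the paper does not run a stability argument at all. Having verified in Lemma \ref{lemma2.004} that for $\ell\equiv 1\pmod 3$ the graph $C_\ell^2$ is edge-colour-critical, and that for $\ell\equiv 2\pmod 3$ deleting any one vertex leaves $\chi=4$ while deleting two suitable edges leaves $\chi=3$, it simply invokes Simonovits' general theorems (Lemmas \ref{lemma2.001} and \ref{lemma2.002}) which say, respectively, that $T_{n,3}$ and $K_1+T_{n-1,3}$ are then the unique extremal graphs for large $n$. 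Your plan amounts to reproving the $s=1$ case of Lemma \ref{lemma2.002} by hand, which is legitimate but much harder, and it is precisely there that your sketch has a genuine gap.

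The gap is in the passage from ``at most one bad vertex'' to $e(G)\le e(T_{n-1,3})+(n-1)$. Your bad vertices are defined by large degree inside their own part, so after deleting the single bad vertex $v^*$ the remainder may still contain linearly many internal edges forming, say, a large induced matching inside one part: no vertex of such a matching is bad, yet these edges are not ``absorbed into the star of $v^*$,'' and your claimed count fails. What is actually needed (and what the paper proves in the spectral setting, Lemmas \ref{lemma3.8} and \ref{lemma3.12}) is that two \emph{independent} internal edges, in the same part or in different parts, already force a copy of $C_{3k+2}^2$; the correct cleaning statement is therefore that the internal edges of $G$ form a star (matching number one in total across the three parts), from which the bound $e(T_{n-1,3})+(n-1)$ does follow. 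Relatedly, for $\ell\equiv1\pmod3$ even a \emph{single} internal edge forces a copy (edge-criticality), so the two residues cannot be handled by one unified absorption around $v^*$; your own remark that the closure ``behaves differently'' in the two cases is pointing at this, but the proposal as written does not supply the independent-edges embedding lemma that the argument actually hinges on.
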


Given a graph $G$, let $A(G)$ be its  adjacency matrix, and $\rho(G)$ be its spectral radius.
For any graph $H$, we define
$$spex(n,H)=\max\{\rho(G): H\nsubseteq G, |V(G)|=n\}.$$
An $H$-free graph on $n$ vertices with spectral radius $spex(n,H)$ is called an \textit{extremal graph} with respect to $spex(n,H)$.
 In 2010, Nikiforov \cite{Nikiforov1} proposed a spectral counterpart as follows, which is known as Brualdi-Solheid-Tur\'{a}n type problem.
 \begin{prob}[\cite{Nikiforov1}]\label{pro1}
For a given graph $H$, what is the exact value of $spex(n,H)$?
 \end{prob}
In the past decades, the problem  has been investigated by many researchers  for many specific graphs $H$.
We refer the reader to the three surveys \cite{CZ2018,LLF2022,N11} and the reference therein for more detail.
Specially,
Nikiforov \cite{Nikiforov2} determined $spex(n,C_{2\ell+1})$ for sufficiently large $n$.
Nikiforov \cite{Nikiforov5} and Zhai et al. \cite{Zhai-3}
determined the unique extremal graph with respect to
$spex(n,C_4)$ for odd and even $n$, respectively.
 Subsequently, Zhai and Lin \cite{ZL2020} determined the unique extremal graph with respect to $spex(n,C_6)$. 
Very recently, Cioab\u{a}, Desai and Tait \cite{Cioaba1} determined the unique extremal graph with respect to $spex(n,C_{2\ell})$ for $\ell\geq3$ and $n$ large enough.
  Yan et al. \cite{Yan} determined the unique extremal graph with respect to $spex(n,C_n^2)$ for $n\geq15$, and they further guessed the extremal graph with respect to $spex(n,C_n^k)$ for $n$ large enough.
Moreover, for each $\ell\in \{3,4,5\}$,   $C_{\ell}^2\cong K_{\ell}$, and so  $spex(n,C_{\ell}^2)$ was determined by  Nikiforov \cite{Nikiforov5} in a more general context.
Then, it is a natural wish to consider a spectral counterpart on Problem \ref{prob-01}.
Thus, we obtain the following result.

\begin{thm}\label{theorem1.3}
Let $\ell\geq 6$ and $G$ be an $n$-vertex $C_{\ell}^2$-free graph for sufficiently large $n$.
Then  $\rho(G)\leq \rho(G(n))$, with equality if and only if $G\cong G(n)$ and $\ell=3k+2$ for some integer $k\geq 2$.
\end{thm}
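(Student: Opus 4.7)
The plan is to combine Theorem~\ref{theorem1.03} with a standard spectral stability argument, in the spirit of the works of Nikiforov and Cioab\u{a}--Desai--Tait cited in the excerpt. A preliminary step is to confirm that $G(n)$ itself is $C_{\ell}^{2}$-free precisely when $\ell\equiv 2\pmod 3$. Since $\chi(C_{\ell}^{2})=4>3=\chi(T_{n-1,3})$ whenever $3\nmid\ell$, any copy of $C_{\ell}^{2}$ in $G(n)$ must use the apex vertex; deleting that vertex from $C_{\ell}^{2}$ leaves the power-path $P_{\ell-1}^{2}$ together with the chord joining its two endpoints, and a direct colouring check shows this graph is $3$-chromatic for $\ell\equiv 0$ or $1\pmod 3$ but $4$-chromatic for $\ell\equiv 2\pmod 3$. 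Hence $G(n)\supseteq C_{\ell}^{2}$ in the former cases, whereas $G(n)$ is $C_{\ell}^{2}$-free in the latter; this both identifies $\rho(G(n))$ as the correct target when $\ell=3k+2$ and explains why equality must fail for the other residues.

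Now fix $\ell\geq 6$ and let $G$ be a $C_{\ell}^{2}$-free graph on $n$ vertices with $\rho(G)\geq\rho(G(n))$. A short equitable-partition calculation gives $\rho(G(n))=\tfrac{2(n-1)}{3}+O(1)$. Let $\mathbf{x}$ denote the Perron eigenvector of $G$ normalised by $x_{u^{*}}=\max_{v}x_{v}=1$. Iterating the eigen-equation once yields $\rho(G)^{2}\leq\sum_{v\in N(u^{*})}d_{G}(v)$, and combining this with the edge bound $e(G)\leq e(G(n))=\tfrac{n^{2}}{3}+O(n)$ from Theorem~\ref{theorem1.03}, together with an averaging argument over $N(u^{*})$, forces $d_{G}(u^{*})=(1-o(1))n$ and $e(G)\geq e(G(n))-o(n^{2})$. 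A stability companion to Theorem~\ref{theorem1.03} then asserts that $G$ differs from $G(n)$ in at most $o(n^{2})$ edges.

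With this approximate structure in hand, the exact characterisation follows from a sequence of weight-improving local surgeries guided by $\mathbf{x}$. If some vertex $v$ is not adjacent to the apex $u^{*}$, replacing a low-weight edge of $v$ by $\{u^{*},v\}$ strictly increases $\rho$ while keeping the modified graph $C_{\ell}^{2}$-free (since it remains a subgraph of the admissible host $G(n)$). Similarly any deviation of $G-u^{*}$ from the balanced tripartite graph $T_{n-1,3}$ yields a Kelmans-type swap that strictly increases $\rho$. When $\ell=3k+2$ these surgeries terminate exactly at $G(n)$, giving the equality case; when $\ell\not\equiv 2\pmod 3$ the graph $G(n)$ itself contains $C_{\ell}^{2}$, so the iteration must halt strictly before reaching $G(n)$, whence $\rho(G)<\rho(G(n))$.

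The main obstacle will be rigorously executing the local-surgery step: each modification must be shown both to strictly increase the Perron eigenvalue and to preserve $C_{\ell}^{2}$-freeness. The preservation check relies on the rigid $3$-colouring pattern of $P_{\ell-1}^{2}$ from the preliminary step, which forces any potential $C_{\ell}^{2}$ through the apex to place its remaining vertices in a very specific cyclic residue pattern across the three parts of the approximate tripartition; verifying that this pattern is absent after each swap, and packaging the verifications into a clean inductive argument, is the heart of the proof and is precisely where the residue class of $\ell$ modulo $3$ is decisive.
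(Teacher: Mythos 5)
Your overall strategy (Tur\'{a}n result plus spectral stability plus local surgery) follows the same skeleton as the paper, and your preliminary observation about exactly when $G(n)$ contains $C_{\ell}^2$ matches Lemma \ref{lemma2.004}. But as written the argument has genuine gaps. First, the step where you claim that $\rho(G)^2\le\sum_{v\in N(u^*)}d_G(v)$ together with $e(G)\le e(G(n))$ ``forces $d_G(u^*)=(1-o(1))n$'' does not follow: since $\sum_{v\in N(u^*)}d_G(v)\le d_G(u^*)\cdot n$, those inequalities only give $d_G(u^*)\ge(\tfrac49-o(1))n$. The existence of a dominating vertex is in fact one of the last things the paper establishes, after a long structural analysis (Lemmas \ref{lemma3.005}--\ref{lemma3.12}), not something that falls out of a one-line averaging. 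Second, the ``stability companion to Theorem \ref{theorem1.03}'' is not a citable black box; what is actually available (Lemma \ref{lemma3.002}, from Desai et al.) gives closeness to $T_{n,3}$ and only under the hypothesis $\chi(C_{\ell}^2)=4$, so your framework silently excludes the case $\ell\equiv 0\pmod 3$, where $\chi(C_{\ell}^2)=3$ and the extremal structure is nowhere near tripartite; that residue needs the separate argument the paper gives via Lemmas \ref{lemma3.001} and \ref{lemma3.002}, and the case $\ell\equiv 1\pmod 3$ is handled most cleanly by the spectral edge-colour-critical theorem (Lemma \ref{lemma3.003}) rather than by surgery.

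Third, and most importantly, the surgery step is circular: you justify that each swap preserves $C_{\ell}^2$-freeness ``since it remains a subgraph of the admissible host $G(n)$'', but you do not yet know that $G$ (or its modification) embeds in $G(n)$ --- that is the conclusion being proved. The real content of the paper's proof is exactly the machinery needed to legitimise such swaps: it shows that each part of the max-cut tripartition has matching number at most $1$ (Lemma \ref{lemma3.8}), that there are no low-degree vertices (Lemma \ref{lemma3.9}), that at most one vertex has large degree inside its own part (Lemmas \ref{lemma3.10} and \ref{lemma3.12}), and that all Perron entries are within a constant factor of the maximum (Lemma \ref{lemma3.11}); each of these is proved by explicitly exhibiting a copy of $C_{3k+2}^2$ via the neighbourhood-intersection estimates of Lemma \ref{lemma3.7} whenever the structure deviates. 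You correctly identify this verification as ``the heart of the proof'', but the proposal does not supply it, so the argument is incomplete at its central step.
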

It is worth mentioning that the extremal graph of Theorem \ref{theorem1.03} is the same as the one in Theorem \ref{theorem1.3}, which implies that they give a new example illustration for which an edge-extremal graph is also a spectral-extremal graph.

\section{Proof of Theorem \ref{theorem1.03}}\label{section04}
Before beginning our proof, we first give some  notations  not defined above. Let $G$ be a graph,  and let  $d_G(v)$ be  the degree of the vertex $v$ in $G$.
 Given a graph $G$, for a vertex $v\in V(G)$ and  a vertex subset $X\subseteq V(G)$ (possibly $v\notin X$),  we denote  $N_X(v)$   the set of neighbors of $v$ in $X$ and $d_X(v)=|N_X(v)|$, and let $G[X]$ be the subgraph of $G$ induced by $X$ and $G-X$ be the subgraph of $G$ induced by $V(G)\setminus X$. Similarly,
For an edge subset $Y\in E(G)$, let $G-Y$ be the graph obtained from $G$ by deleting all edges in $Y$.

 A graph $H$ is called an \emph{edge-color-critical graph}, if there exists an edge $e\in E(H)$
such that $\chi(H-\{e\})<\chi(H)$.
Simonovits  proved the following two results, which are  important tools in extremal graph theory.

 \begin{lem}\label{lemma2.001}\emph{(\cite{Simonovits1966})}
Let $r\geq 2$ and $F$ be an edge-color-critical graph with $\chi(F)=r+1$.
Then $T_{n,r}$ is the unique extremal graph with respect to $ex(n,F)$ for sufficiently large $n$.
 \end{lem}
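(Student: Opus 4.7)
The plan is to follow the classical stability-based approach introduced by Simonovits himself. First I would invoke the Erd\H{o}s--Stone--Simonovits theorem to record the asymptotic $ex(n,F)=(1-\tfrac{1}{r}+o(1))\binom{n}{2}$; since $\chi(T_{n,r})=r<r+1=\chi(F)$, the Tur\'an graph is $F$-free, so $e(T_{n,r})\le ex(n,F)$ and the two quantities agree up to $o(n^2)$.

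Next, fix an $F$-free graph $G$ on $n$ vertices with $e(G)=ex(n,F)$. By the Erd\H{o}s--Simonovits stability theorem, there is a partition $V(G)=V_1\cup\cdots\cup V_r$ with $|V_i|=\tfrac{n}{r}+o(n)$ such that the number of ``misplaced'' edges (edges inside some $V_i$ together with non-edges across parts) is $o(n^2)$. Call $v$ \emph{good} if it has at most $\varepsilon n$ neighbors inside its own part and at least $(1-\tfrac{1}{r}-\varepsilon)n$ neighbors in each other part; all but $o(n)$ vertices are good. Then I would upgrade this to an exact minimum-degree statement: if some $v$ had $d_G(v)<(1-\tfrac{1}{r})n-O(1)$, then $e(G-v)>ex(n-1,F)$ would violate extremality via induction on $n$. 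After removing the few low-degree vertices and reinserting them optimally (a standard symmetrization step), one concludes that every vertex has degree close to $(1-\tfrac{1}{r})n$ and lies in the ``right'' part of the partition.

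The decisive step is the color-critical argument. I claim $G$ contains no edge inside any $V_i$. Suppose for contradiction that $uv\in E(G[V_i])$. Choose an edge $e=xy\in E(F)$ with $\chi(F-e)=r$ and fix an $r$-coloring of $F-e$ with color classes $U_1,\ldots,U_r$; because $\chi(F)=r+1$, the endpoints $x$ and $y$ must share a class, which we may assume is $U_i$. I would embed $F$ into $G$ by sending $x\mapsto u$, $y\mapsto v$, and embedding $U_j$ into $V_j$ for each $j\ne i$ and the rest of $U_i$ into $V_i$, constructed greedily vertex by vertex. This produces a copy of $F$ in $G$, contradicting $F$-freeness. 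Hence $G$ is $r$-partite, and among $n$-vertex $r$-partite graphs the Tur\'an graph $T_{n,r}$ is the unique maximizer of the edge count, finishing the proof.

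The main obstacle is executing the greedy embedding cleanly. At each step one needs the already-placed images to have common neighborhood of size at least $|V(F)|$ in the part currently being used. This is handled by first discarding the $o(n)$ bad vertices so that every remaining vertex in $V_j$ has at most $\varepsilon n$ non-neighbors in each other part; after at most $|V(F)|$ placements the common neighborhood still has size $\Omega(n)$, which is much larger than $|V(F)|$. The role of edge-color-criticality is precisely to force $x$ and $y$ into a single color class, so that we can legitimately use the in-part edge $uv$ as the home for the ``critical'' edge $e$; without this, the embedding would require an edge inside $V_i$ plus the full cross structure, which the stability neighborhood estimates supply.
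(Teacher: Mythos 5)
The paper offers no proof of this lemma: it is quoted directly from Simonovits's 1966/1968 paper, and your proposal is exactly the stability-plus-greedy-embedding argument that the cited reference contains (asymptotics from Erd\H{o}s--Stone--Simonovits, stability partition, minimum-degree cleanup, then using the critical edge $e=xy$ with both endpoints forced into one colour class to embed $F$ onto an in-part edge), so the reconstruction is sound and there is nothing different to compare. One slip to fix: a ``good'' vertex should have at least $\bigl(\tfrac{1}{r}-\varepsilon\bigr)n$ neighbours in each \emph{other} part (each part has only about $n/r$ vertices), not $\bigl(1-\tfrac{1}{r}-\varepsilon\bigr)n$; with that correction, and granting that the ``reinsert the $o(n)$ bad vertices optimally'' step is the genuinely laborious symmetrization/progressive-induction part of Simonovits's original argument rather than a one-liner, the plan goes through.
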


 For convenience, we cite the following lemma, which was given by Simonovits \cite{Simonovits} in a more general result. 
\begin{lem}\label{lemma2.002}\emph{(\cite{Simonovits})}
Let $F$ be a given graph with $\chi(F)=r+1$.
If omitting any $s$ vertices of $F$ we obtain a graph with chromatic number greater than $r$ but omitting $s+1$ suitable edges of $F$
we get a $r$-chromatic graph, then $K_s+T_{n-s,r}$ is the unique extremal graph with respect to $ex(n,F)$ for sufficiently large $n$.
\end{lem}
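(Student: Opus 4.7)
The plan is to combine the Erd\H{o}s--Stone--Simonovits asymptotic bound $ex(n,F)=(1-1/r+o(1))\binom{n}{2}$, the Erd\H{o}s--Simonovits stability theorem, and a refinement step that uses the two halves of the hypothesis in distinct ways: the vertex-deletion clause justifies the construction, while the edge-deletion clause forces exact uniqueness.

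I would first verify the lower bound by showing that $K_s+T_{n-s,r}$ is itself $F$-free. Any copy of $F$ inside $K_s+T_{n-s,r}$ sends at most $s$ vertices of $F$ into the $K_s$ part, so deleting these (at most $s$) vertices of $F$ leaves a subgraph of $T_{n-s,r}$, which is $r$-colourable; this contradicts the hypothesis that deleting any $s$ vertices of $F$ leaves chromatic number exceeding $r$. Hence $ex(n,F)\ge e(K_s+T_{n-s,r})$.

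For the matching upper bound and uniqueness, let $G$ be an $F$-free graph on $n$ vertices with $e(G)\ge e(K_s+T_{n-s,r})$. Erd\H{o}s--Stone--Simonovits makes $G$ edge-extremal up to lower-order terms, and the stability theorem supplies a balanced partition $V(G)=V_1\cup\cdots\cup V_r$ with $||V_i|-n/r|=o(n)$ and at most $o(n^2)$ defect edges (edges inside parts plus missing edges between parts). Strip off the $o(n)$ vertices of large defect degree to obtain an induced subgraph $G'$ with only $O(1)$ defects, using the lower bound on $e(G)$ to force the stripped set to be of bounded size. The key refinement then uses the edge-critical hypothesis: fix an $r$-colouring $\psi$ of $V(F)$ in which exactly $s+1$ edges are monochromatic, and suppose towards a contradiction that some edge $uv$ of $G'$ lies inside a part $V_i$. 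Using the near-completeness of the bipartite pieces of $G'$ together with the $s$ ``almost-universal'' stripped vertices (forced by $e(G)\ge e(K_s+T_{n-s,r})$, which rules out having fewer than $s$ such vertices), one greedily embeds $F$ by placing the monochromatic substructure of $\psi$ on $uv$ and the stripped vertices, then extending the remaining vertices of each colour class into the corresponding part $V_j$; at each step there are $(1/r-o(1))n$ available targets versus $O(1)$ forbidden ones, so the extension succeeds, contradicting $F$-freeness of $G$. Hence each $V_i$ of $G'$ is independent, a symmetric argument gives completeness between parts, and a final counting step pins the stripped set down to a $K_s$ fully joined to $G'\cong T_{n-s,r}$, yielding $G\cong K_s+T_{n-s,r}$.

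The main obstacle is synchronising the two numerical parameters in the refinement step: one must engineer the greedy embedding so that the $s$ stripped vertices exactly accommodate those endpoints of the $s+1$ monochromatic edges of $F$ that do not fit on the single defect edge $uv$, while each remaining colour class of $\psi$ still has room in the corresponding $V_j$. Ensuring that the $o(n^2)$ stability error never blocks the embedding, and that the vertex-count $s$ and edge-count $s+1$ in the hypothesis interact correctly rather than in isolation, is the delicate heart of Simonovits's original method.
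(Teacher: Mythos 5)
First, a point of reference: the paper does not prove this lemma at all --- it is quoted from Simonovits \cite{Simonovits} (``in a more general result''), so your attempt can only be judged against what a complete proof would require, not against an argument in the paper. Within your sketch, the easy direction is correct: $K_s+T_{n-s,r}$ is $F$-free because a copy of $F$ would put at most $s$ of its vertices into the $K_s$, and chromatic number is monotone under vertex deletion. Your embedding device in the refinement step is also well designed: placing one edge of the monochromatic matching of $F$ on the defect edge $uv$ and one endpoint of each of the remaining $s$ matching edges on a distinct almost-universal vertex tolerates an \emph{arbitrary} distribution of the $s+1$ monochromatic edges over the colour classes, which is exactly what the hypothesis leaves uncontrolled.

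The genuine gap sits at the step you dispose of in a parenthesis: that $e(G)\ge e(K_s+T_{n-s,r})$ ``forces'' the stripped set to consist of $s$ almost-universal vertices, and that stripping leaves a graph with only $O(1)$ defects. Neither claim follows from stability plus counting. Stability gives $o(n^2)$ defect edges; removing vertices of large defect degree can still leave $\Theta(n)$ intra-part edges (a matching inside one part has maximum defect degree $1$), and the excess of roughly $sn/r$ edges above $e(T_{n,r})$ is equally consistent with configurations containing no almost-universal vertex at all --- for instance, $s$ vertices adjacent to most of their own part but missing a constant fraction of the other parts, or many intra-part edges spread over several parts. Excluding such configurations is the real content of the theorem, and it forces one to embed $F$ using intra-part edges that do \emph{not} sit on universal vertices; there the colour-distribution problem you deferred returns in full force, because a matching of $s+1$ intra-part edges with an arbitrary distribution over the parts can host the monochromatic matching of $F$ only if $F$ admits a colouring with a compatible distribution, while the hypothesis supplies a single colouring with one unknown distribution. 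The paper's Section \ref{section4} shows how much work this costs even in the one case $F=C_{3k+2}^2$, $r=3$, $s=1$: Lemmas \ref{lemma3.8}, \ref{lemma3.10} and \ref{lemma3.12} eliminate the various defect configurations (two independent intra-part edges in one part; two heavy vertices in the same part or in different parts; a heavy vertex together with an independent intra-part edge elsewhere) by exhibiting, for each, a different explicit embedding of $C_{3k+2}^2$ --- that is, a different colouring of it, verified by hand and not deducible from the hypotheses of the present lemma. So your outline assumes the structural conclusion ($s$ universal vertices plus an almost-Tur\'{a}n remainder) in order to derive it; filling this hole requires Simonovits's symmetrisation/progressive-induction machinery, or a case analysis of the kind just described, and not only the greedy extension you give.
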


Also, Erd\H{o}s and Simonovits \cite{Erdos-Simonovits} proved the following lemma.
\begin{lem} \label{lemma2.003} \emph{(\cite{Erdos-Simonovits})}
Let $F$ be a graph with chromatic number $\chi(F)=r+1$.
For sufficiently large $n$,
\begin{center}
  $ex(n,F)=(1-\frac{1}{r})\binom{n}{2}+o(n^2).$
\end{center}
\end{lem}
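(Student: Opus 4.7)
The plan is to prove $ex(n,F)=(1-1/r)\binom{n}{2}+o(n^{2})$ by establishing matching lower and upper bounds, with the crux being a reduction of the upper bound to the classical Erd\H{o}s--Stone theorem. The lower bound is almost immediate: the Tur\'{a}n graph $T_{n,r}$ is $r$-chromatic and so cannot contain any graph with chromatic number $r+1$; hence $T_{n,r}$ is $F$-free and
\begin{equation*}
ex(n,F)\ge e(T_{n,r})=\Bigl(1-\frac{1}{r}\Bigr)\binom{n}{2}-O(n),
\end{equation*}
which already yields the desired lower bound $ex(n,F)\ge (1-1/r)\binom{n}{2}+o(n^{2})$.

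For the upper bound, I would reduce to Erd\H{o}s--Stone via an embedding. Set $t=|V(F)|$. A proper $(r+1)$-coloring of $F$ partitions $V(F)$ into $r+1$ independent sets, each of size at most $t$; padding each color class to size exactly $t$ shows that $F$ is a subgraph of the complete balanced $(r+1)$-partite graph $K_{r+1}(t,\dots,t)$. Consequently every $F$-free graph is also $K_{r+1}(t,\dots,t)$-free, and therefore
\begin{equation*}
ex(n,F)\le ex\bigl(n,K_{r+1}(t,\dots,t)\bigr).
\end{equation*}
It thus suffices to prove the Erd\H{o}s--Stone estimate: for every fixed $t$ and $\varepsilon>0$, $ex(n,K_{r+1}(t,\dots,t))\le (1-1/r+\varepsilon)\binom{n}{2}$ once $n$ is sufficiently large.

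To prove Erd\H{o}s--Stone itself I would induct on $r$. The base case $r=1$ is the K\H{o}v\'{a}ri--S\'{o}s--Tur\'{a}n bound $ex(n,K_{t,t})=O(n^{2-1/t})=o(n^{2})$. For the inductive step, given a graph $G$ on $n$ vertices with $e(G)\ge (1-1/r+\varepsilon)\binom{n}{2}$, I would choose an auxiliary parameter $s\gg t$ and apply the inductive hypothesis with a suitably smaller $\varepsilon'$ to produce a copy of $K_{r}(s,\dots,s)\subseteq G$ with parts $A_{1},\dots,A_{r}$. A density argument exploiting the $\varepsilon\binom{n}{2}$ excess edges shows that many vertices $v\notin\bigcup_i A_{i}$ have at least $t$ neighbors in every $A_{i}$; pigeonholing these vertices across the $\binom{s}{t}^{r}$ possible selections of a $t$-subset in each $A_{i}$ then yields $t$ such vertices sharing a common $t$-neighborhood in each $A_{i}$, and these $t$ vertices together with the selected $t$-subsets form the desired $K_{r+1}(t,\dots,t)$.

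The main obstacle is the parameter hierarchy in the inductive step: one must arrange $\varepsilon\gg \varepsilon'\gg 1/s\gg 1/n$ so that simultaneously (i)~the inductive hypothesis actually delivers a copy of $K_{r}(s,\dots,s)$ in $G$, and (ii)~the number of outside vertices of large neighborhood in each $A_{i}$ exceeds $(t-1)\binom{s}{t}^{r}$, which is what the final pigeonhole demands. Once this calibration is pinned down, each individual bound is routine counting, and the lemma follows by combining the lower bound, the embedding reduction, and Erd\H{o}s--Stone.
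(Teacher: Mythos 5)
The paper never proves this lemma at all---it is imported verbatim from Erd\H{o}s--Simonovits \cite{Erdos-Simonovits} as a black box---so your proposal is compared against the standard proof of the Erd\H{o}s--Stone--Simonovits theorem that you are reconstructing. Your lower bound (since $\chi(T_{n,r})=r<r+1=\chi(F)$, the Tur\'{a}n graph is $F$-free), your embedding reduction $F\subseteq K_{r+1}(t,\dots,t)$ with $t=|V(F)|$, your base case via K\H{o}v\'{a}ri--S\'{o}s--Tur\'{a}n, and your final pigeonhole over the $\binom{s}{t}^{r}$ choices of $t$-subsets are all correct. The induction set-up is also sound: since $1-\frac{1}{r}=1-\frac{1}{r-1}+\frac{1}{r(r-1)}$, the inductive hypothesis does deliver a copy of $K_{r}(s,\dots,s)$ for any fixed $s$.

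The genuine gap is the sentence claiming that ``a density argument exploiting the $\varepsilon\binom{n}{2}$ excess edges shows that many vertices $v\notin\bigcup_i A_{i}$ have at least $t$ neighbors in every $A_{i}$.'' This step fails for an arbitrary copy of $K_{r}(s,\dots,s)$ produced by applying the induction to $G$ itself: a global edge count controls nothing about how the remaining $n-rs$ vertices attach to one fixed copy on $rs=O(1)$ vertices. Concretely, let $G$ be the disjoint union of $K_{rs}$ with any graph on $n-rs$ vertices of the required density; $G$ satisfies your hypothesis, yet the copy of $K_{r}(s,\dots,s)$ inside the clique component has \emph{no} outside vertex with even one neighbor in it. A counting repair at the density level also fails: each vertex violating your condition is forced to have only $s-t$ non-adjacencies into the copy, so even if \emph{every} outside vertex violates it, the forced non-edge count is $O(ns)$, which is absorbed without contradiction by the $(\frac{1}{r}-\varepsilon)\binom{n}{2}=\Theta(n^{2})$ non-edges your hypothesis permits. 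The missing idea---which your hierarchy $\varepsilon\gg\varepsilon'\gg 1/s\gg 1/n$ tacitly presupposes but the sketch never supplies---is a minimum-degree reduction: iteratively delete any vertex of degree less than $\left(1-\frac{1}{r}+\frac{\varepsilon}{2}\right)$ times the current order; the edge count forces this to halt at a subgraph $H$ with $|H|\ge\sqrt{\varepsilon/2}\,n$ and $\delta(H)\ge\left(1-\frac{1}{r}+\frac{\varepsilon}{2}\right)|H|$. Applying the induction \emph{inside} $H$, every vertex of $H$ has at most $\left(\frac{1}{r}-\frac{\varepsilon}{2}\right)|H|$ non-neighbors, so the non-adjacencies into the copy total at most $rs\left(\frac{1}{r}-\frac{\varepsilon}{2}\right)|H|$, and choosing $s$ of order $t/(r\varepsilon)$ leaves $\Omega(\varepsilon|H|)$ vertices with $t$ neighbors in every $A_{i}$---enough for the pigeonhole once $n$ is large. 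With that step inserted, your argument becomes the standard proof and is correct.
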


Given a graph $F$.
We say that $\{V_1,V_2,\dots,V_{\chi(F)}\}$ is a \emph{good-partition} of $V(F)$ if
$\{V_1,V_2,\dots,V_{\chi(F)}\}$ is a partition of $V(F)$ and for each $i\in\{ 1, 2,\dots, \chi(F) \}$, $V_i$ is an independent set of $F$.
Now we  give the following lemma on the chromatic number of $C_{\ell}^2$ which will be crucial for the sequel.
\begin{lem}\label{lemma2.004}
Let $C_{\ell}=v_1v_2\dots v_{\ell}v_1$. \\
(i) If $\ell\equiv 0 \pmod{3}$, then $\chi(C_{\ell}^2)=3$.\\
(ii) If $\ell\equiv 1 \pmod{3}$, then $\chi(C_{\ell}^2)=4$ and $\chi(C_{3\ell+1}^2-\{v_1v_{\ell}\})=3$.\\
(iii) If $\ell\equiv 2 \pmod{3}$, then $\chi(C_{\ell}^2)=4$.
Moreover, $\chi(C_{\ell}^2-\{v\})=4$ for any vertex $v\in V(C_{\ell}^2)$, and $\chi(C_{\ell}^2-\{v_{\ell}v_1,v_3v_4\})=3$.
\end{lem}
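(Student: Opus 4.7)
The plan rests on the unifying observation that in any proper $3$-coloring of $C_\ell^2$, each triple $\{v_i,v_{i+1},v_{i+2}\}$ is a triangle and so uses all three colors; superimposing two consecutive such triangles forces $c(v_{i+3})=c(v_i)$. Hence any $3$-coloring (of $C_\ell^2$, or of any subgraph retaining these consecutive triangles) is periodic modulo $3$ along the cycle indices, and every claim in the lemma reduces to a residue check against the wrap-around edges $v_\ell v_1$, $v_\ell v_2$, $v_{\ell-1}v_1$.

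Part (i) is then immediate: for $\ell\equiv 0\pmod 3$ the coloring $c(v_i)\equiv i\pmod 3$ is consistent on all edges, matching the triangle lower bound $\chi\geq 3$. For parts (ii) and (iii), Brooks' theorem yields the upper bound $\chi(C_\ell^2)\leq 4$ since for $\ell\geq 6$ the graph $C_\ell^2$ is connected, $4$-regular, and neither $K_5$ nor an odd cycle. The lower bound $\chi\geq 4$ follows from the periodicity: when $\ell\equiv 1\pmod 3$ it forces $c(v_\ell)=c(v_1)$, and when $\ell\equiv 2\pmod 3$ it forces $c(v_\ell)=c(v_2)$, each contradicting a surviving edge. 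The single edge-deletion claim attached to (ii) is handled by the same coloring $c(v_i)\equiv i\pmod 3$, which is now proper on $C_\ell^2-v_1v_\ell$: the removed edge was precisely the unique violated constraint, while $v_\ell v_2$ and $v_{\ell-1}v_1$ connect distinct residues. The vertex-deletion claim in (iii) follows by the rotational symmetry of $C_\ell^2$: assume $v=v_\ell$, observe that every triangle on $v_1,\dots,v_{\ell-1}$ survives (so the mod-$3$ periodicity still holds), and note that the surviving distance-$2$ edge $v_{\ell-1}v_1$ now fails since $\ell-1\equiv 1\pmod 3$.

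The main obstacle is the explicit $3$-coloring for $\chi(C_\ell^2-\{v_\ell v_1,v_3v_4\})=3$. Deleting $v_3v_4$ destroys the two triangles $v_2v_3v_4$ and $v_3v_4v_5$, decoupling the periodicity between $v_3$ and $v_6$ and allowing a one-step phase shift there. Writing $\ell=3k+2$, I would set $c(v_1)=1$, $c(v_2)=2$, $c(v_3)=3$, $c(v_4)=3$ (permitted because $v_3v_4$ is removed) and propagate periodically $(3,1,2,3,1,2,\dots)$ from $v_4$ onward. A direct mod-$3$ tally gives $c(v_{\ell-2})=2$, $c(v_{\ell-1})=3$, $c(v_\ell)=1$, so the only conflict is at $v_\ell v_1$, which has been removed. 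The bookkeeping---verifying every remaining edge, including the distance-$2$ wrap-arounds $v_\ell v_2$, $v_\ell v_{\ell-2}$, $v_{\ell-1}v_1$ and the local edges around the shifted vertices---against the phase-shifted coloring is the technical hurdle; once completed, the lemma follows.
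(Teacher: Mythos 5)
Your proposal is correct and follows essentially the same route as the paper: both rest on the observation that consecutive triangles force any $3$-coloring to be periodic modulo $3$ along the indices (the paper phrases this as the uniqueness of the good-partition of $P_\ell^2$), reduce each lower bound to a wrap-around edge check, and exhibit an explicit phase-shifted $3$-coloring for $C_\ell^2-\{v_\ell v_1,v_3v_4\}$; I verified that your coloring $c(v_1)=1$, $c(v_2)=2$, $c(v_3)=c(v_4)=3$ with periodic continuation is proper on all surviving edges. The only cosmetic difference is that you obtain $\chi(C_\ell^2)\le 4$ from Brooks' theorem ($C_\ell^2$ is connected, $4$-regular, and neither $K_5$ nor an odd cycle for $\ell\ge 6$), whereas the paper writes down explicit $4$-partitions.
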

 \begin{proof}
Let $P_{\ell}=v_1v_2\dots v_{\ell}$.
Recall that $P_{\ell}^2$ is obtained by joining all pairs of vertices with distance at most  two in $P_{\ell}$.
Then, $\{v_1,v_2,v_3\}$ induces a triangle in $P_{\ell}^2$,
which indicates that $\chi(P_{\ell}^2)\geq 3$.
We first show that $\chi(P_{\ell}^2)=3$.
For every $i\in \{1,2,3\}$, set
        $$V_i=\{v_j~|~j\in\{1,\dots,\ell\}~~\text{and}~(j-i)~\text{is divisible by}~3\}.$$
Then, any two vertices $v_{i_1}$ and $v_{i_2}$ in $V_i$ are non-adjacent in $P_{\ell}^2$ since  $|i_1-i_2|\geq 3$.
Thus, $\chi(P_{\ell}^2)=3$.
Moreover, since $V_i$ is an independent set of $P_{\ell}^2$, $\{V_1,V_2,V_3\}$ is a good-partition of $V(P_{\ell}^2)$.
Furthermore, we shall prove that $\{V_1,V_2,V_3\}$ are the unique good-partition of $V(P_{\ell}^2)$.
In some good-partition, for any $i\in \{4,\dots,\ell\}$,
since $\{v_{i-3},v_{i-2},v_{i-1}\}$ induces a triangle in $P_{\ell}^2$,
we can see that $v_{i-3},v_{i-2},v_{i-1}$ are in different parts.
Similarly, $v_{i-2},v_{i-1},v_{i}$ are also in different parts.
This implies that $v_{i}$ and $v_{i-3}$ are in the same part.
Furthermore, $v_i$ and $v_j$ are in the same part if and only if $(j-i)$ is divisible by 3,
and hence $\{V_1,V_2,V_3\}$ are the unique good-partition of $V(P_{\ell}^2)$.

Note that $P_{\ell}^2$ is a subgraph of $C_{\ell}^2$. Then, $\chi(C_{\ell}^2)\geq \chi(P_{\ell}^2)=3$.
Now assume that $\chi(C_{\ell}^2)=3$.
Then, $C_{\ell}^2$ is also a $3$-partite graph with the unique good-partition $\{V_1,V_2,V_3\}$.

(i) If $\ell\equiv 0 \pmod{3}$, then for every $i\in \{1,2,3\}$, $V_i$ is an independent set of $C_{\ell}^2$.
So, $\chi(C_{\ell}^2)=3$.

(ii) If $\ell\equiv 1 \pmod{3}$, then $v_1,v_{\ell}\in V_1$.
However, $v_1v_{\ell}\in E(C_{\ell}^2)$, contradicting the definition of good-partition of $V(C_{\ell}^2)$.
Thus, $\chi(C_{\ell}^2)\geq 4$.
Set $V_1'=V_1\setminus\{v_{\ell}\}$, $V_2'=V_2$, $V_3'=V_3$ and $V_4'=\{v_{\ell}\}$.
Then each $V_i'$ is an independent set of $C_{\ell}^2$, which implies that $\chi(C_{\ell}^2)=4$.
Now we consider the graph $C_{\ell}^2-\{v_1v_{\ell}\}$.
Note that $P_{\ell}^2$ is a subgraph of $C_{\ell}^2-\{v_1v_{\ell}\}$.
Then, $\chi(C_{\ell}^2-\{v_1v_{\ell}\})\geq \chi(P_{\ell}^2)=3$.
Then for every $i\in \{1,2,3\}$, $V_i$ is an independent set of $C_{\ell}^2$,
which implies that $\chi(C_{\ell}^2-\{v_1v_{\ell}\})=3$, as desired.

(iii) If $\ell\equiv 2 \pmod{3}$, then $v_1,v_{\ell-1}\in V_1$.
However, $v_1v_{\ell-1}\in E(C_{\ell}^2)$, contradicting  the definition of good-partition of $V(C_{\ell}^2)$.
Thus, $\chi(C_{\ell}^2)\geq 4$.
Set $V_1'=(V_1\setminus\{v_1,v_4\})\cup\{v_3\}$,
$V_2'=(V_2\setminus\{v_2,v_{\ell}\})\cup\{v_1\}$,
$V_3'=(V_3\setminus\{v_3\})\cup \{v_2\}$ and $V_4'=\{v_4,v_{\ell}\}$.
Then each $V_i'$ is an independent set of $C_{\ell}^2$, which implies that $\chi(C_{\ell}^2)=4$.

Now we show that $\chi(C_{\ell}^2-\{v\})=4$ for any vertex $v\in V(C_{\ell}^2)$.
Clearly, $\chi(C_{\ell}^2-\{v\})\leq \chi(C_{\ell}^2)=4$.
Thus, it suffices to prove that  $\chi(C_{\ell}^2-\{v\})\ge 4$.
Suppose  to the contrary that $\chi(C_{\ell}^2-\{v\})\le 3$.
Note that $\chi(C_{\ell}^2-\{v\})\ge 3$ since $C_3\subseteq C_{\ell}^2-\{v\}.$
Therefore, $\chi(C_{\ell}^2-\{v\})= 3$.
We may assume without loss of generality that $v=v_{\ell}$.
By the similar discussion of $P_{\ell}^2$, we can obtain that $v_1$ and $v_{\ell-1}$ are in the same part,
which is contrary to  $v_1v_{\ell-1}\in E(C_{\ell}^2-\{v_{\ell}\})$.
Thus, $\chi(C_{\ell}^2-\{v\})=4$.

Set $V_1''=(V_1\setminus\{v_1\})\cup\{v_3\}$,
$V_2''=(V_2\setminus\{v_2\})\cup\{v_1\}$ and
$V_3''=(V_3\setminus\{v_3\})\cup \{v_2\}$.
Then each $V_i''$ is an independent set of $C_{\ell}^2-\{v_{\ell}v_1,v_3v_4\}$, which implies that $\chi(C_{\ell}^2-\{v_{\ell}v_1,v_3v_4\})=3$.
\end{proof}

Now we are in a position to give the  proof of Theorem \ref{theorem1.03}.
\vspace{2mm}

\begin{proof}[\textbf{Proof of Theorem~\ref{theorem1.03}}]
If $\ell\equiv 0 \pmod{3}$, then by Lemma \ref{lemma2.004}, $\chi(C_{\ell}^2)=3$.
Furthermore,  by Lemma \ref{lemma2.003} and a simple calculation, we have $e(G)=\frac{1}{2}\binom{n}{2}+o(n^2)<e(G(n))$, as desired.
If $\ell\equiv 1 \pmod{3}$,
then by Lemma \ref{lemma2.004} (ii), $C_{\ell}^2$ is an edge-critical graph with $\chi(C_{\ell}^2)=4$.
By Lemma \ref{lemma2.001}, $T_{n,3}$ is the unique extremal graph with respect to $ex(n,C_{\ell}^2)$ for sufficiently large $n$, and by a direct calculation, we yield that $e(T_{n,3})<e(G(n))$, as desired.
It remains the case $\ell\equiv 2 \pmod{3}$. According to Lemmas \ref{lemma2.002} and \ref{lemma2.004} (iii),
we obtain that $G(n)$ is the unique extremal graph with respect to $ex(n,C_{\ell}^2)$, as desired.
\end{proof}

\section{Proof of Theorem \ref{theorem1.3}}\label{section4}
In this section, we first list some lemmas that will be used in later proof of  Theorem  \ref{theorem1.3}.
Denote by $K_r(n_1,n_2,\dots,n_r)$ the complete $r$-partite graph with classes of orders $n_1,n_2,\dots,n_r$.
The following is the spectral version of the Stability Lemma due to Nikiforov \cite{Nikiforov4}.

\begin{lem}\label{lemma3.001}\emph{(\cite{Nikiforov4})}
Let $r\ge 2$, $\frac{1}{\ln n}<c<r^{-8(r+21)(r+1)}$, $0<\varepsilon<2^{-36}r^{-24}$ and $G$ be an $n$-vertex graph.
If $\rho(G)>(1-\frac1r-\varepsilon)n$, then one of the following holds:\\
(i) $G$ contains a $K_{r+1}(\lfloor c\ln n\rfloor, \dots,\lfloor c\ln n\rfloor,\lceil n^{1-\sqrt{c}}\rceil)$;\\
(ii) $G$ differs from $T_{n,r}$ in fewer than $(\varepsilon^{\frac{1}{4}}+c^{\frac{1}{8r+8}})n^2$ edges.
\end{lem}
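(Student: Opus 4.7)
The plan is to prove this spectral stability lemma by the standard ``spectral-to-combinatorial'' route, combining Perron-eigenvector analysis, supersaturation, Erd\H{o}s--Simonovits edge stability, and an iterated K\H{o}v\'{a}ri--S\'{o}s--Tur\'{a}n (KST) extraction. The global strategy is to first upgrade the hypothesis $\rho(G)>(1-1/r-\varepsilon)n$ into a usable edge and degree hypothesis, then dichotomize into either closeness to $T_{n,r}$ or an abundant supply of copies of $K_{r+1}$, and in the latter case extract a large balanced complete $(r+1)$-partite subgraph of the required dimensions.

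First, let $\mathbf{x}$ be the Perron eigenvector of $G$ with $\|\mathbf{x}\|_2=1$. Exploiting $\rho\,\mathbf{x}_v=\sum_{u\sim v}\mathbf{x}_u$ together with Cauchy--Schwarz, I would locate a heavy core $V_+\subseteq V(G)$ of size $(1-o(1))n$ on which $\mathbf{x}$ concentrates and such that almost every $v\in V_+$ has $d_G(v)\geq (1-1/r-O(\sqrt{\varepsilon}))n$. A double-counting argument on $V_+$ then upgrades the crude bound $2e(G)\geq \rho(G)^2\geq (1-1/r-\varepsilon)^2 n^2$ to the much sharper $e(G)\geq (1-1/r)\binom{n}{2}-c_1\sqrt{\varepsilon}\,n^2$, which is exactly the edge-density hypothesis that classical edge stability is designed for. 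I would then dichotomize: either $G$ already differs from $T_{n,r}$ in at most $\varepsilon^{1/4}n^2$ edges, and alternative (ii) holds immediately, or else the edge form of Erd\H{o}s--Simonovits stability together with supersaturation forces $G$ to contain at least $c_2(\varepsilon)\,n^{r+1}$ copies of $K_{r+1}$ for some positive $c_2=c_2(\varepsilon,r)$.

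In the second case, I would run an iterated KST extraction to convert the $K_{r+1}$-supersaturation into a single large balanced complete multipartite subgraph. By averaging over $(r+1)$-tuples of ``color slots,'' I expect to find vertex sets $U_1,\ldots,U_r$ of size $t=\lfloor c\ln n\rfloor$ each, pairwise completely joined, whose common neighborhood has size at least $\lceil n^{1-\sqrt{c}}\rceil$; repeating the argument inside that common neighborhood produces the desired $K_{r+1}(\lfloor c\ln n\rfloor,\ldots,\lfloor c\ln n\rfloor,\lceil n^{1-\sqrt{c}\,}\rceil)$ and yields alternative (i).

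The main obstacle is entirely quantitative: one must calibrate the Perron-eigenvector cleanup, the stability margin, and the extraction loss so that every error term combines into exactly $(\varepsilon^{1/4}+c^{1/(8r+8)})n^2$. The exponent $1/4$ arises from composing the square-root loss in the spectral-to-edge conversion with a further square-root loss in the edge-stability step, while the exponent $1/(8r+8)$ traces back to the $(r+1)$-fold KST iteration with window $c\ln n$ combined with the prescribed range $c<r^{-8(r+21)(r+1)}$ of the parameter. No new conceptual idea should be required beyond this calibration, but threading the constants carefully through each of the spectral, combinatorial, and extraction steps is the delicate bookkeeping that makes the final estimate sharp.
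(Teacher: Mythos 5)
The first thing to note is that the paper does not prove this lemma at all: it is quoted directly from Nikiforov \cite{Nikiforov4}, so there is no in-paper argument to compare yours against, and your attempt has to be judged on its own merits.

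Judged that way, it has a genuine gap, and it occurs at the very first step. You claim that from $\rho(G)>(1-\frac1r-\varepsilon)n$ one can extract a core of $(1-o(1))n$ vertices of degree at least $(1-\frac1r-O(\sqrt{\varepsilon}))n$ and thereby upgrade the crude bound $\rho(G)^2\le 2e(G)$ to $e(G)\ge (1-\frac1r)\binom{n}{2}-c_1\sqrt{\varepsilon}\,n^2$. This is false. Take $G$ to be a clique on $\lceil(1-\frac1r)n\rceil$ vertices together with $\lfloor n/r\rfloor$ isolated vertices: then $\rho(G)\ge(1-\frac1r)n-1>(1-\frac1r-\varepsilon)n$, yet $e(G)\approx\frac12(1-\frac1r)^2n^2$, which falls short of $(1-\frac1r)\binom{n}{2}$ by $\Theta_r(n^2)$ --- a deficit independent of $\varepsilon$ --- and the vertices of large degree number only about $(1-\frac1r)n$, not $(1-o(1))n$. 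So the spectral hypothesis simply does not imply the edge-stability hypothesis, and the dichotomy you build on top of it (Erd\H{o}s--Simonovits stability versus $K_{r+1}$-supersaturation) never gets off the ground. This is exactly why spectral stability is genuinely harder than edge stability: a graph of large spectral radius may be globally sparse but locally dense, and in that regime one must establish alternative (i) directly inside the dense part (as happens in the example above, where $K_{r+1}(\lfloor c\ln n\rfloor,\dots,\lfloor c\ln n\rfloor,\lceil n^{1-\sqrt{c}}\rceil)$ sits inside the clique), rather than argue through global edge counts. Nikiforov's actual proof is organized around precisely this obstruction, working with clique counts and his spectral Erd\H{o}s--Stone--Bollob\'{a}s machinery rather than reducing to global edge density; your later steps (KST-type extraction of a large complete $(r+1)$-partite subgraph from many cliques) are fine in spirit, but without a correct replacement for the first step the argument does not stand. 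Any repair needs a case distinction of the form: either $e(G)$ really is close to $(1-\frac1r)\binom{n}{2}$, where your route can proceed, or the Perron mass concentrates on a proper dense subgraph, in which case one must find the large complete $(r+1)$-partite subgraph there directly.
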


From the above lemma,
Desai et al. \cite{Desai} obtained the following result,
which help us to  present an approach to prove Theorem \ref{theorem1.3}.

\begin{lem} \label{lemma3.002}\emph{(\cite{Desai})}
Let $F$ be a graph with chromatic number $\chi(F)=r+1$.
For every $\varepsilon>0$, there exist $\delta>0$ and $n_0$ such that
if $G$ is an $F$-free graph on $n\ge n_0$ vertices with $\rho(G)\ge (1-\frac1r-\delta)n$,
then $G$ can be obtained from $T_{n,r}$ by adding and deleting at most $\varepsilon n^2$ edges.
\end{lem}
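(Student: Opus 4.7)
The plan is to reduce directly to Lemma \ref{lemma3.001} via a routine parameter chase. The key observation is that the first alternative in that dichotomy cannot occur for an $F$-free $G$ once $n$ is large compared with $|V(F)|$, so the second alternative must hold, and one only has to match the two deviation bounds. No further combinatorial input is required beyond the standard embedding of a bounded-size $(r+1)$-chromatic graph into a sufficiently large complete $(r+1)$-partite graph.

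Write $f:=|V(F)|$ and fix a proper $(r+1)$-colouring of $F$; each colour class then has at most $f$ vertices. Given the target tolerance $\varepsilon>0$, I would choose auxiliary parameters $\eta$ and $c$ so that $\eta^{1/4}+c^{1/(8r+8)}\le \varepsilon$, while respecting the ranges $0<\eta<2^{-36}r^{-24}$ and $0<c<r^{-8(r+21)(r+1)}$ demanded by Lemma \ref{lemma3.001}; for instance $\eta=\min\{(\varepsilon/2)^4,\,2^{-37}r^{-24}\}$ and $c=\min\{(\varepsilon/2)^{8r+8},\,r^{-8(r+22)(r+1)}\}$. Set $\delta:=\eta$, and choose $n_0$ large enough that for every $n\ge n_0$ one has $1/\ln n<c$, $\lfloor c\ln n\rfloor\ge f$, and $\lceil n^{1-\sqrt c}\rceil\ge f$. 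Now given any $F$-free graph $G$ on $n\ge n_0$ vertices with $\rho(G)\ge (1-\tfrac1r-\delta)n$, apply Lemma \ref{lemma3.001} to $G$ with its $\varepsilon$-parameter set to $\eta$ and its $c$ as above. Alternative (i) would yield $K_{r+1}(\lfloor c\ln n\rfloor,\dots,\lfloor c\ln n\rfloor,\lceil n^{1-\sqrt c}\rceil)\subseteq G$; by the choice of $n_0$ every part has at least $f$ vertices, so this complete $(r+1)$-partite graph contains every $(r+1)$-chromatic graph on at most $f$ vertices (simply place each colour class of the fixed $(r+1)$-colouring of $F$ into a different part), in particular $F$ itself---a contradiction with $F$-freeness. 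Hence alternative (ii) must hold, and $G$ differs from $T_{n,r}$ in fewer than $(\eta^{1/4}+c^{1/(8r+8)})n^2\le \varepsilon n^2$ edges, which is exactly the conclusion of Lemma \ref{lemma3.002}.

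The only real obstacle is the parameter bookkeeping: the chosen $\delta$ and $n_0$ must simultaneously satisfy every range appearing in Lemma \ref{lemma3.001}, and one should also be mindful that the hypothesis $\rho(G)>(1-\tfrac1r-\varepsilon)n$ in that lemma is strict, whereas the hypothesis of Lemma \ref{lemma3.002} is the non-strict $\rho(G)\ge(1-\tfrac1r-\delta)n$. Both are easy to reconcile, for example by shrinking $\delta$ by an arbitrarily small factor at the outset so that any $G$ meeting the non-strict bound also meets the strict one. Since each of the parameter choices depends only on $r$, $\varepsilon$, and $f=|V(F)|$ and not on any finer structure of $F$, the argument is uniform in $F$ and produces the claimed existence of $\delta$ and $n_0$.
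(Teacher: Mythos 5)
Your proposal is correct and follows exactly the route the paper indicates: the paper states this lemma as a consequence of Lemma \ref{lemma3.001} (citing Desai et al.\ rather than reproving it), and your derivation---ruling out alternative (i) because each part of the complete $(r+1)$-partite subgraph is large enough to host a colour class of $F$, then reading off alternative (ii) with the parameters tuned so that $\eta^{1/4}+c^{1/(8r+8)}\le\varepsilon$---is precisely that standard deduction. The parameter bookkeeping, including the strict-versus-non-strict inequality fix via shrinking $\delta$, is handled correctly, so there is nothing to add.
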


By Nikiforov's result in \cite{Nikiforov10}   and a more careful calculation on the
equality case in his proof, one can get the following spectral version of  the edge-color-critical theorem as follows.

\begin{lem} \label{lemma3.003}\emph{(\cite{Nikiforov10})}
Let $r\geq 2$ and $F$ be an edge-color-critical graph with $\chi(F)=r+1$.
Then $T_{n,r}$ is the unique extremal graph with respect to $spex(n,F)$ for sufficiently large $n$.
\end{lem}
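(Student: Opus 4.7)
The plan is to follow the standard spectral stability pipeline, using Lemma \ref{lemma3.001} (or equivalently Lemma \ref{lemma3.002}) as the black-box reduction and then upgrading the approximate structure to an exact one via the edge-color-critical property of $F$. Let $G$ be an $n$-vertex $F$-free graph with $\rho(G)=spex(n,F)$. Since $\chi(T_{n,r})=r<r+1=\chi(F)$, the graph $T_{n,r}$ is itself $F$-free, so $\rho(G)\ge \rho(T_{n,r})\ge (1-\tfrac{1}{r})n-1$. Apply Lemma \ref{lemma3.001} with parameters $\varepsilon,c$ chosen sufficiently small. The first alternative yields a $K_{r+1}(\lfloor c\ln n\rfloor,\dots,\lfloor c\ln n\rfloor,\lceil n^{1-\sqrt{c}}\rceil)$ in $G$. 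Because $F$ admits a proper $(r+1)$-coloring whose color classes have size at most $|V(F)|$, for $n$ large we have $\lfloor c\ln n\rfloor\ge |V(F)|$, and $F$ embeds into this blow-up, contradicting $F$-freeness. Hence $G$ differs from $T_{n,r}$ in at most $\eta n^{2}$ edges, where $\eta$ can be made as small as we wish by tuning $\varepsilon,c$.

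Next I would upgrade this approximate structure using the Perron eigenvector $\mathbf{x}$ of $G$, normalized by $\max_v x_v=1$ and attained at $u^{*}$. From $\rho(G)x_v=\sum_{w\sim v}x_w$ together with $\rho(G)\ge (1-\tfrac{1}{r})n-1$, a standard comparison with $T_{n,r}$ gives $x_v\ge 1-\eta_1$ and $d_G(v)\ge (1-\tfrac{1}{r})n-\eta_2 n$ for every $v$, with $\eta_1,\eta_2\to 0$ as $\eta\to 0$; any vertex violating this could be deleted and cloned from $u^{*}$ to strictly increase $\rho$, a contradiction. Choose the partition $V(G)=V_1\cup\cdots\cup V_r$ minimizing the number of intra-part edges; stability forces $||V_i|-n/r|=o(n)$ and the number of intra-part edges to be $o(n^{2})$. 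A further round of local moves (shift a vertex to the part in which it has the fewest neighbors, weighted by $x_v$) then enforces the near-regularity property that every vertex $v\in V_i$ has codegree at least $(\tfrac{1}{r}-\eta_3)n$ into each other $V_j$; otherwise a single vertex swap would raise $\mathbf{x}^{\!\top}\!A(G)\mathbf{x}$ above $\rho(G)$.

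Finally I would close the gap between $G$ and $T_{n,r}$ using edge-color-criticality. Suppose there exists an edge $uv$ of $G$ lying inside some part $V_i$. Since $F$ is edge-color-critical, fix $e=xy\in E(F)$ with $\chi(F-e)=r$, and take a proper $r$-coloring of $F-e$ in which $x$ and $y$ both receive color $i$. Embed $F-e$ into $G$ by mapping $x\mapsto u$, $y\mapsto v$, and extending greedily one vertex at a time: for each not-yet-embedded vertex of color $j$, the common-neighborhood bound from the previous paragraph guarantees at least $(\tfrac{1}{r}-O(|V(F)|\eta_3))n\ge |V(F)|$ candidate vertices in $V_j$, so a new target in $V_j$ avoiding the already-used vertices always exists. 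Adding back the image of $uv$ yields a copy of $F$ in $G$, a contradiction. Therefore every $V_i$ is independent, so $G$ is $r$-partite with parts $V_1,\dots,V_r$ and is $F$-free trivially; among such graphs, $\rho$ is maximized uniquely by the complete balanced $r$-partite graph, namely $T_{n,r}$, forcing $G\cong T_{n,r}$.

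The main obstacle is the greedy embedding step: one needs the near-regularity to hold simultaneously for every vertex and every pair of parts, so that after fixing $u,v$ and embedding a bounded number of vertices of $F-e$, enough candidates remain to continue. This is the only place where the edge-color-critical hypothesis is actually used, and it is what prevents the argument from extending to arbitrary $F$ of chromatic number $r+1$; handling it requires choosing the stability parameters $\varepsilon,c$ small enough relative to $|V(F)|$ before invoking Lemma \ref{lemma3.001}.
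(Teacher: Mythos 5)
The paper itself contains no proof of this lemma: it is quoted from Nikiforov \cite{Nikiforov10}, with the remark that the uniqueness statement follows from a more careful treatment of the equality case in his proof. So your proposal can only be judged on its own merits. Its skeleton --- spectral stability via Lemma \ref{lemma3.001}/\ref{lemma3.002}, a max-cut partition, a codegree-based greedy embedding of $F-e$ with the removed edge placed on an intra-part edge, and finally uniqueness of $T_{n,r}$ among $r$-partite graphs --- is the standard modern pipeline, and is essentially what the paper does for Theorem \ref{theorem1.3} in the special case $F=C_{\ell}^2$. Your observation that in every proper $r$-coloring of $F-e$ the endpoints of $e$ receive the same color is correct and is the right way to exploit edge-color-criticality. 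However, two of your intermediate steps are genuinely broken, and they are exactly the steps that constitute the real work.

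First, the ``delete a bad vertex and clone $u^*$'' argument is invalid, because vertex duplication does not preserve $F$-freeness for general $F$: if the copy of $F$ in the modified graph uses both $u^*$ and its twin, you can only conclude that $G$ contains $F$ with two non-adjacent vertices identified, which need not contain $F$. Concretely, for $F=C_5$ (edge-color-critical, $\chi=3$), a triangle with a pendant vertex is $C_5$-free, yet duplicating its degree-three vertex creates a $C_5$. This is precisely why the paper's analogous step (Lemma \ref{lemma3.9}) does not clone: it rejoins the bad vertex to an independent set spread over the other parts and proves $F$-freeness of the modified graph by a replacement argument --- any copy of $F$ must pass through the modified vertex, which can then be swapped for a fresh vertex of $V_1^1$ via Lemma \ref{lemma3.7}, yielding a copy of $F$ in $G$ itself. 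Second, your near-regularity step is incoherent as stated: reassigning a vertex to another part changes the partition, not the graph, so it cannot change $x^TA(G)x$; and the property you want --- every vertex having at least $(\frac{1}{r}-\eta_3)n$ neighbors in every other part --- cannot be enforced at all, since nothing prevents a vertex from splitting its neighborhood evenly between two parts. The max-cut property combined with the minimum-degree bound yields only about $\frac{n}{2r}$ neighbors in each other part, which is useless for the greedy embedding: intersecting $s\geq 2$ such neighborhoods inside a part of size roughly $\frac{n}{r}$ gives nothing. Handling the exceptional vertices for which near-regularity fails (the sets $S$ and $W^{\lambda}$ in the paper) is precisely the bulk of the argument there (Lemmas \ref{lemma3.006}--\ref{lemma3.12}), and your sketch assumes it away; consequently the greedy embedding, which you correctly identify as the crux, rests on a hypothesis your argument does not establish.
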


%
Now we  give the  proof of Theorem \ref{theorem1.3}.
In what follows, assume that $G$ is an extremal graph with respect to $spex(n,C_{\ell}^2)$.
Clearly, $G$ is connected. Otherwise, we choose $G_1$ and $G_2$ as two components, where $\rho(G_1)=\rho(G),$ and then we can add a cut edge between $G_1$ and $G_2$ to obtain a new graph with lager spectral radius, which is a contradiction.
By Perron-Frobenius theorem, there exists a positive unit eigenvector
$X=(x_1,\ldots,x_n)^T$ corresponding to $\rho(G)$.

If $\ell\equiv 1 \pmod{3}$,
then by Lemma \ref{lemma2.004} (ii), $C_{\ell}^2$ is an edge-critical graph with $\chi(C_{\ell}^2)=4$.
By Lemma \ref{lemma3.003}, $T_{n,3}$ is the unique extremal graph with respect to $spex(n,C_{\ell}^2)$ for sufficiently large $n$. Clearly,  $\rho(T_{n,3})<\rho(G(n))$, as desired.

If $\ell\equiv 0 \pmod{3}$, then by Lemma \ref{lemma2.004}, $\chi(C_{\ell}^2)=3$.
Note that $T_{n,2}$ is $C_{\ell}^2$-free.
Then $\rho(G)\geq \rho(T_{n,2})\geq (\frac12-\delta)n$ for any $\delta>0$.
Setting $\varepsilon=\frac{1}{10^8}$, by Lemma \ref{lemma3.002} we have
\begin{equation}\label{eq-1}
  e(G)\leq e(T_{n,2})+\frac{1}{10^8} n^2.
\end{equation}
Now we prove that $\rho(G)<\rho(G(n))$.
Otherwise, since $\rho(G(n))\geq \frac{2}{3}n$ (see Lemma \ref{lemma3.004} as below), we have $\rho(G)\geq \rho(G(n))\geq \frac{2}{3}n$.
By Lemma \ref{lemma3.001}, one of the following holds:
\begin{itemize}
  \item[(i)] $G$ contains a $K_{4}(\lfloor c\ln n\rfloor,\lfloor c\ln n\rfloor, \lfloor c\ln n\rfloor,\lceil n^{1-\sqrt{c}}\rceil)$;
  \item[(ii)] $G$ differs from $T_{n,3}$ in fewer than $(\varepsilon^{\frac{1}{4}}+c^{\frac{1}{24}})n^2\leq \frac{1}{50}n^2$ edges.
\end{itemize}
If $(i)$ holds, then $G$ contains a  $C_{\ell}^2$ for sufficiently large $n$, a contradiction;
if $(ii)$ holds, then $e(G)\geq e(T_{n,3})-\frac{1}{50}n^2$, contrary to (\ref{eq-1}).
Therefore, $\rho(G)< \rho(G(n))$, as desired.

Therefore, we are left with  considering the situation $\ell\equiv 2 \pmod{3}$.
In the following, we always assume that $\ell=3k+2$ with $k\geq 2$.
Set $x_{u^*}=\max\{x_i~|~i\in V(G)\}$, and  choose a positive constant $\eta<\frac{1}{9(120k+48)}$, which will be frequently used in the sequel.
Our goal is to  prove $G\cong G(n)$ for $n$ sufficiently large.

\begin{lem}\label{lemma3.004}
$\rho(G)\geq \rho(G(n)) \geq \frac{2n}{3}.$
\end{lem}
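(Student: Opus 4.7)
The plan is to prove the two inequalities $\rho(G) \geq \rho(G(n))$ and $\rho(G(n)) \geq \tfrac{2n}{3}$ separately, and neither is hard.

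For the first inequality, I would appeal directly to Theorem~\ref{theorem1.03}. In the case $\ell = 3k+2$ with $k \geq 2$, that theorem (obtained via Lemmas~\ref{lemma2.002} and \ref{lemma2.004}(iii)) identifies $G(n) = K_1 + T_{n-1,3}$ as the unique extremal graph with respect to $ex(n, C_\ell^2)$ for large $n$; in particular $G(n)$ is itself $C_\ell^2$-free. Since $G$ is a spectral-extremal $C_\ell^2$-free graph on $n$ vertices, $\rho(G) \geq \rho(H)$ for every $C_\ell^2$-free graph $H$ of order $n$, and so $\rho(G) \geq \rho(G(n))$.

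For the second inequality, I would feed the all-ones vector $\mathbf{1}$ into the Rayleigh quotient:
\[
\rho(G(n)) \;\geq\; \frac{\mathbf{1}^{T} A(G(n))\,\mathbf{1}}{\mathbf{1}^{T}\mathbf{1}} \;=\; \frac{2\,e(G(n))}{n}.
\]
Because $G(n) = K_1 + T_{n-1,3}$, one has $e(G(n)) = e(T_{n-1,3}) + (n-1)$. Writing $n-1 = 3q + r$ with $r \in \{0,1,2\}$ and using $e(T_{m,3}) = \binom{m}{2} - \sum_i \binom{m_i}{2}$ with part sizes $m_i$ differing by at most one, a brief case analysis confirms $e(G(n)) \geq n^2/3$ for all sufficiently large $n$ (the three residue classes each reduce to the trivial inequality $q \geq 1/3$ or $q \geq 0$). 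This yields $\rho(G(n)) \geq 2n/3$ as required.

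There is no substantive obstacle here: the lemma is essentially a routine computation. The only minor bookkeeping is the mod-$3$ case split for $e(T_{n-1,3})$, which one could circumvent via a uniform estimate of the form $e(T_{m,3}) \geq m^2/3 - m/3$. The real role of the lemma is merely to supply the spectral lower bound $\rho(G) \geq 2n/3$ that will subsequently trigger the spectral stability machinery of Lemma~\ref{lemma3.001} in the main argument.
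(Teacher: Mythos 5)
Your proposal is correct and follows essentially the same route as the paper: $\rho(G)\ge\rho(G(n))$ because $G(n)$ is $C_\ell^2$-free by Theorem~\ref{theorem1.03}, and $\rho(G(n))\ge 2e(G(n))/n\ge 2n/3$ via the Rayleigh quotient with the all-ones vector. The only cosmetic difference is that the paper skips your mod-$3$ case split by using the uniform bound $e(T_{n-1,3})=\lfloor (n-1)^2/3\rfloor\ge (n^2-2n)/3$, exactly the shortcut you note at the end.
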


\begin{proof}
Recall that $G(n)\cong K_1+T_{n-1,3}$. From Theorem~\ref{theorem1.03} we know that $G(n)$ is $C_{3k+2}^2$-free.
Since $e(T_{n-1,3})=\big\lfloor\frac{(n-1)^2}{3}\big\rfloor\geq \frac{n^2-2n}{3},$
we have
\begin{center}
  $e(G(n))=e(T_{n-1,3})+(n-1)\ge \frac{1}{3}n^2+\frac{1}{3}n-1. $
\end{center}
Using the Rayleigh quotient gives
\begin{center}
  $\rho(G)\ge \rho(G(n))\ge \frac{\mathbf{1}^TA(G(n))\mathbf{1}}{\mathbf{1}^T\mathbf{1}}=\frac{2e(G(n))}{n}\ge \frac{2n}{3}+\frac{2}{3}-\frac{2}{n}\geq \frac{2n}{3}$
\end{center}
for sufficiently large $n$, as desired.
\end{proof}

\begin{lem}\label{lemma3.005}
For $n$ sufficiently large, $e(G)\ge \big(\frac{1}{3}-\eta^3\big)n^2.$
Furthermore, $G$ admits a partition $V(G)=V_1\cup V_2\cup V_3$ such that $\sum_{1\leq i<j\leq 3}e(V_i,V_j)$ attains the maximum,
$\sum_{i=1}^{3}e(V_i)\le \eta^3 n^2$ and $\big||V_i|-\frac{n}{3}\big|\le\eta n$ for each $i\in \{1,2,3\}.$
\end{lem}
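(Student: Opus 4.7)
The plan is to combine the spectral stability result (Lemma~\ref{lemma3.002}) with the lower bound on $\rho(G)$ (Lemma~\ref{lemma3.004}) and then extract the size balance through a convexity computation on a max-cut--type partition. Since $\ell=3k+2$ with $k\geq 2$, Lemma~\ref{lemma2.004}(iii) gives $\chi(C_{\ell}^2)=4$, so we are in the setting of Lemma~\ref{lemma3.002} with $r=3$.

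First I would fix an auxiliary constant $\varepsilon=\eta^{3}/2$ and apply Lemma~\ref{lemma3.002}: there exist $\delta>0$ and $n_0$ so that every $C_{\ell}^{2}$-free graph on $n\geq n_0$ vertices with $\rho\geq(\tfrac{2}{3}-\delta)n$ differs from $T_{n,3}$ in at most $\varepsilon n^{2}$ edges. Lemma~\ref{lemma3.004} gives $\rho(G)\geq \tfrac{2}{3}n$, so the hypothesis is met and $G$ is within edit distance $\varepsilon n^{2}$ of $T_{n,3}$. Using $e(T_{n,3})\geq \tfrac{n^{2}}{3}-n$ and absorbing lower-order terms into $\eta^{3}n^{2}$, this immediately yields $e(G)\geq e(T_{n,3})-\varepsilon n^{2}\geq (\tfrac{1}{3}-\eta^{3})n^{2}$ for sufficiently large $n$, proving the first assertion.

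For the partition, let $U_{1}\cup U_{2}\cup U_{3}$ be the partition of $V(G)$ inherited from $T_{n,3}$. Since $T_{n,3}$ has no edges inside its parts, every edge of $G$ lying inside some $U_{i}$ is one of the edges ``added'' relative to $T_{n,3}$, and thus $\sum_{i}e_{G}(U_{i})\leq \varepsilon n^{2}$. Now let $V_{1}\cup V_{2}\cup V_{3}$ be a partition of $V(G)$ that maximizes $\sum_{i<j}e(V_{i},V_{j})$; equivalently, it minimizes $\sum_{i}e(V_{i})$. By the preceding inequality on the $U_{i}$ partition, $\sum_{i=1}^{3}e(V_{i})\leq \varepsilon n^{2}\leq \eta^{3}n^{2}$, giving the second assertion.

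It remains to prove the size balance $\bigl||V_{i}|-\tfrac{n}{3}\bigr|\leq \eta n$. Set $n_{i}=|V_{i}|$ and suppose for contradiction that $|n_{i_{0}}-\tfrac{n}{3}|>\eta n$ for some $i_{0}$. A short convexity calculation (minimizing $n_{1}^{2}+n_{2}^{2}+n_{3}^{2}$ subject to $\sum n_{j}=n$ and the single deviation constraint, by making the other two parts equal) yields $\sum_{j}n_{j}^{2}\geq \tfrac{n^{2}}{3}+\tfrac{3\eta^{2}}{2}n^{2}$, hence
\[
\sum_{j<k}n_{j}n_{k}=\tfrac{1}{2}\Bigl(n^{2}-\sum_{j}n_{j}^{2}\Bigr)\leq \tfrac{n^{2}}{3}-\tfrac{3\eta^{2}}{4}n^{2}.
\]
Combining $e(G)-\sum_{i}e(V_{i})\leq \sum_{j<k}n_{j}n_{k}$ with the bounds already established yields $(\tfrac{1}{3}-\eta^{3})n^{2}-\eta^{3}n^{2}\leq \tfrac{n^{2}}{3}-\tfrac{3\eta^{2}}{4}n^{2}$, i.e.\ $\tfrac{3\eta^{2}}{4}\leq 2\eta^{3}$, forcing $\eta\geq \tfrac{3}{8}$. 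This contradicts the standing assumption $\eta<\tfrac{1}{9(120k+48)}$, and the balance condition follows. The only delicate point is choosing $\varepsilon$ small enough relative to $\eta$ so that all three inequalities ($e(G)$ lower bound, within-part edge bound, and the convexity contradiction) close simultaneously; the choice $\varepsilon=\eta^{3}/2$ handles this uniformly.
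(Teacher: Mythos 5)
Your proposal is correct and follows essentially the same route as the paper: apply Lemma~\ref{lemma3.002} with a small $\varepsilon$ relative to $\eta^3$ to get the edge lower bound and a near-balanced partition with few internal edges, pass to a max-cut partition to preserve the internal-edge bound, and use a convexity/AM--GM estimate on the part sizes to force $\bigl||V_i|-\tfrac{n}{3}\bigr|\le\eta n$. The only cosmetic difference is that you phrase the balance step as a contradiction while the paper directly bounds the deviation $a$ by $a^2\le\tfrac{8}{3}\eta^3n^2\le\eta^2n^2$; the underlying computation is identical.
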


\begin{proof}
Let $\varepsilon$ be a positive constant with $\varepsilon<\eta^3$.
From Lemma \ref{lemma2.004} we have $\chi(C_{3k+2}^2)=4$, and
 Lemma \ref{lemma3.004} gives $\rho(G)\geq \frac{2n}{3}$.
Combining these with Lemma \ref{lemma3.002}, we have
$e(G)\geq \frac{1}{3}n^2-\eta^3 n^2$,
and there exists a partition $V(G)=U_1\cup U_2\cup U_3$ such that
$\lfloor\frac{n}{3}\rfloor\le |U_1|\leq|U_2|\leq |U_3| \leq \lceil\frac{n}{3}\rceil$ and $\sum_{i=1}^3e(U_i)\le \eta^3 n^2$.

We now select a new partition $V(G)=V_1\cup V_2\cup V_3$
such that $\sum_{1\leq i<j\leq 3}e(V_i,V_j)$ attains the maximum.
Then $\sum_{i=1}^{3}e(V_i)$ attains the minimum, and so
\begin{center}
$\sum\limits_{i=1}^{3}e(V_i)\le \sum\limits_{i=1}^{3}e(U_i)\le \eta^3 n^2.$
\end{center}
On the other hand, assume that $|V_1|=\frac{n}{3}+a$, then
$|V_2||V_3|\leq \left(\frac{|V_2|+|V_3|}{2}\right)^2=\frac{1}{4}\left(\frac{2n}{3}-a\right)^2$.
Thus,
\begin{center}
   $\sum\limits_{1\leq i<j\leq 3}e(V_i,V_j)= |V_1|(|V_2|+|V_3|)+|V_2||V_3|\leq\frac{n^2}{3}-\frac{3}{4}a^2.$
\end{center}
Therefore,
\begin{center}
  $e(G)= \sum\limits_{1\leq i<j\leq 3}e(V_i,V_j)+\sum\limits_{i=1}^{3}e(V_i)\le \frac{n^2}{3}-\frac{3}{4}a^2+\eta^3 n^2.$
\end{center}
Combining this with $e(G)\ge \frac{1}{3}n^2-\eta^3 n^2$, we get $a^2\leq \frac{8}{3}\eta^{3} n^2\leq \eta^2 n^2$,
and so $||V_1|-\frac{n}{3}|=|a|\leq\eta n$.
Similarly, $||V_i|-\frac{n}{3}|\leq \eta n$ for $i\in \{2,3\}$.
This completes the proof.
\end{proof}

In the following, we shall define two vertex subsets $S$ and $W^{\lambda}$ of $G$.

\begin{lem}\label{lemma3.006}
Let $S=\{v\in V(G)~|~d_G(v)\le \big(\frac{2}{3}-6\eta\big)n\}.$
Then $|S|\le \eta n$.
\end{lem}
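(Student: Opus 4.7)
The plan is to establish $|S|\le \eta n$ by a clean double-counting argument on the partition $V(G)=V_1\cup V_2\cup V_3$ supplied by Lemma~\ref{lemma3.005}. The guiding intuition is that the partition is near-balanced and essentially captures all edges of $G$ in its three bipartite pieces, so those bipartite pieces must be globally very dense. Hence a vertex $v\in S$ of unusually small degree is forced to miss many of the neighbors it ``should'' have in the two parts not containing it, and the total stock of missing cross-edges is tightly limited by Lemma~\ref{lemma3.005}.

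First I would quantify the aggregate number of missing cross-edges. Expanding $(\sum_i|V_i|)^2$ and using Cauchy--Schwarz yields the elementary bound $\sum_{1\le i<j\le 3}|V_i||V_j|\le n^2/3$. Combining this with $\sum_i e(V_i)\le\eta^3 n^2$ and $e(G)\ge(\tfrac13-\eta^3)n^2$ from Lemma~\ref{lemma3.005}, I obtain
$$\sum_{1\le i<j\le 3}\bigl(|V_i||V_j|-e(V_i,V_j)\bigr)\;\le\;\frac{n^2}{3}-\Bigl(e(G)-\sum_{i=1}^3 e(V_i)\Bigr)\;\le\;2\eta^3 n^2.$$

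Next, for each vertex $v\in V_i$ let $m(v)=(|V_j|+|V_k|)-d_{V_j\cup V_k}(v)$ denote its number of non-neighbors in the other two parts, where $\{i,j,k\}=\{1,2,3\}$. Since $|V_i|\le n/3+\eta n$, I get $m(v)\ge (n-|V_i|)-d_G(v)\ge \tfrac{2n}{3}-\eta n-d_G(v)$, and hence $m(v)\ge 5\eta n$ whenever $v\in S$. On the other hand, each missing cross-edge is counted exactly twice in $\sum_v m(v)$, so the bound above gives $\sum_v m(v)\le 4\eta^3 n^2$. Combining,
$$5\eta n\cdot|S|\;\le\;\sum_{v\in S}m(v)\;\le\;\sum_{v\in V(G)}m(v)\;\le\;4\eta^3 n^2,$$
which yields $|S|\le \tfrac{4}{5}\eta^2 n<\eta n$, using that $\eta$ is well below $1$ by the standing assumption $\eta<\tfrac{1}{9(120k+48)}$.

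There is no real obstacle here: all of the substantive work (extracting the tripartite skeleton from the spectral hypothesis via the spectral stability lemma) has already been absorbed into Lemma~\ref{lemma3.005}. The only point worth verifying is that the per-vertex slack $5\eta n$ in the missing-neighbor bound genuinely dominates the global missing-edge budget $2\eta^3 n^2$, which holds automatically because $\eta$ is a small absolute constant. In fact the argument delivers the stronger estimate $|S|=O(\eta^2 n)$, which may be useful in later steps.
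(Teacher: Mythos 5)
Your argument is correct, and it takes a genuinely different route from the paper. The paper argues by contradiction: if $|S|>\eta n$, it deletes a subset $S'\subseteq S$ of exactly $\lfloor\eta n\rfloor$ low-degree vertices and checks that the surviving graph on $n'=n-\lfloor\eta n\rfloor$ vertices still has more than $\tfrac13(n'+1)^2>e(G(n'))$ edges, so Theorem~\ref{theorem1.03} forces a copy of $C_{3k+2}^2$ in $G-S'\subseteq G$. That proof uses only the edge-count lower bound $e(G)\ge(\tfrac13-\eta^3)n^2$ from Lemma~\ref{lemma3.005} but leans on the full edge-extremal theorem as a black box. Your proof instead exploits the partition structure of Lemma~\ref{lemma3.005} directly: the identity $\sum_{v}m(v)=2\sum_{i<j}\bigl(|V_i||V_j|-e(V_i,V_j)\bigr)$ together with $\sum_{i<j}|V_i||V_j|\le n^2/3$ bounds the total stock of missing cross-pairs by $4\eta^3 n^2$, while each $v\in S$ contributes at least $5\eta n$ of them. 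All the individual steps check out (in particular $m(v)\ge(n-|V_i|)-d_G(v)\ge 5\eta n$ for $v\in S$, and the factor $2$ in the double count). What each approach buys: the paper's is shorter once Theorem~\ref{theorem1.03} is in hand; yours is self-contained modulo Lemma~\ref{lemma3.005}, avoids any appeal to the Tur\'an-type theorem, and yields the quantitatively stronger conclusion $|S|\le\tfrac45\eta^2 n$, which would be harmless (indeed potentially convenient) in the later lemmas where only $|S|\le\eta n$ is used.
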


\begin{proof}
Suppose to the contrary that $|S|>\eta n$,
then there exists a subset $S'\subseteq S$ with $|S'|=\lfloor\eta n\rfloor$.
Set $n'=|G-S'|=n-\lfloor\eta n\rfloor$. Then $n'+1< (1-\eta)n+2$.
Combining these with Lemma \ref{lemma3.005}  that $e(G)\ge  (\frac{1}{3}-\eta^3\big)n^2$,  we deduce that
\begin{align*}
 e(G-S')&\ge  e(G)-\sum_{v\in S'}d_G(v)\nonumber\\
  &\ge  \big(\frac{1}{3}-\eta^3\big)n^2-\eta n\Big(\frac{2}{3}-6\eta\Big)n\nonumber\\
  &= \frac13\big(1-3\eta^3-2\eta+18\eta^2\big)n^2\nonumber\\
  &>  \frac13\big(n'+1\big)^2
\end{align*}
for sufficiently large $n$.
Note that $\frac13(n'+1)^2>e(G(n')).$
Then, $e(G-S')>e(G(n'))$. By Theorem \ref{theorem1.03},
$G-S'$ contains a copy of $C_{3k+2}^2$,
contradicting that $G$ is $C_{3k+2}^2$-free.
\end{proof}

\begin{lem}\label{lemma3.6}
Let $W^{\lambda}=W_1^{\lambda}\cup W_2^{\lambda}\cup W_3^{\lambda}$, where $W_i^{\lambda}=\{v\in V_i~|~d_{V_i}(v)\ge 2^{\lambda}\eta n\}$.
Then $|W^{\lambda}|\le \frac{1}{2^{\lambda-1}}\eta^2 n$.
\end{lem}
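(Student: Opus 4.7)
The plan is a straightforward double-counting argument that leverages the bound $\sum_{i=1}^{3} e(V_i) \le \eta^{3} n^{2}$ from Lemma~\ref{lemma3.005}. Roughly, each vertex in $W_i^{\lambda}$ is forced by definition to contribute many edges to the inside of $V_i$, while the total number of such inside edges is tightly controlled.

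First I would fix $i \in \{1,2,3\}$ and estimate $|W_i^{\lambda}|$ by counting the pairs $(v,w)$ with $v \in W_i^{\lambda}$ and $w \in N_{V_i}(v)$. By the definition of $W_i^{\lambda}$,
\[
|W_i^{\lambda}| \cdot 2^{\lambda} \eta n \;\le\; \sum_{v \in W_i^{\lambda}} d_{V_i}(v) \;\le\; \sum_{v \in V_i} d_{V_i}(v) \;=\; 2\, e(V_i),
\]
so $|W_i^{\lambda}| \le \dfrac{2\, e(V_i)}{2^{\lambda} \eta n}$. Summing over $i \in \{1,2,3\}$ and invoking the inequality $\sum_{i=1}^{3} e(V_i) \le \eta^{3} n^{2}$ furnished by Lemma~\ref{lemma3.005} yields
\[
|W^{\lambda}| \;=\; \sum_{i=1}^{3} |W_i^{\lambda}| \;\le\; \frac{2 \sum_{i=1}^{3} e(V_i)}{2^{\lambda}\eta n} \;\le\; \frac{2\, \eta^{3} n^{2}}{2^{\lambda}\eta n} \;=\; \frac{\eta^{2} n}{2^{\lambda-1}},
\]
which is exactly the claimed bound.

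There is essentially no obstacle here: the entire argument is a one-line inequality chain once the correct double counting is set up, and it uses nothing from the $C_{\ell}^{2}$-free hypothesis other than what has already been absorbed into Lemma~\ref{lemma3.005}. The only care needed is to respect that the vertices of $W_i^{\lambda}$ lie in $V_i$ itself (so that $d_{V_i}(v)$ genuinely counts edges within $V_i$), and to not worry about edges being counted twice since the upper bound $\sum_{v \in W_i^{\lambda}} d_{V_i}(v) \le 2 e(V_i)$ is more than enough slack for the required conclusion.
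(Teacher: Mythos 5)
Your proposal is correct and is essentially identical to the paper's own proof: both bound $|W_i^{\lambda}|\cdot 2^{\lambda}\eta n\le \sum_{v\in W_i^{\lambda}}d_{V_i}(v)\le 2e(V_i)$ and then sum over $i$ using $\sum_{i=1}^{3}e(V_i)\le \eta^{3}n^{2}$ from Lemma~\ref{lemma3.005}. No issues.
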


\begin{proof}
For $i\in \{1,2,3\}$,
\begin{center}
  $2e(V_i)=\sum\limits_{v\in V_i}d_{V_i}(v)\ge
\sum\limits_{v\in W_i^{\lambda}}d_{V_i}(v)\ge |W_i^{\lambda}|\cdot 2^{\lambda}\eta n.$
\end{center}
Combining this with Lemma \ref{lemma3.005} gives
\begin{center}
  $\eta^3 n^2\ge \sum\limits_{i=1}^{3}e(V_i)\ge 2^{\lambda-1} \sum\limits_{i=1}^{3}|W_i^{\lambda}|\eta n=2^{\lambda-1}|W^{\lambda}|\eta n.$
\end{center}
This yields that $|W^{\lambda}|\leq \frac{1}{2^{\lambda-1}}\eta^2 n$.
\end{proof}

For every $i\in \{1,2,3\}$, denote by  $V_i^{\lambda}=V_i\setminus (W^{\lambda}\cup S)$. Then we give the following result.

\begin{lem}\label{lemma3.7}
Let $i_1,i_2,i_3$ be three distinct integers in  $\{1,2,3\}$, $s\leq 3k+1$, and $\lambda\in \{1,5\}$. Then \\
(i) for any $u\in V_{i_1}\setminus S$, $d_{V_{i_2}}(u)\geq\big(\frac{1}{9}-5\eta\big)n$; \\
(ii) for any $u\in V_{i_1}^{\lambda}$, $d_{V_{i_2}}(u)\geq\left(\frac{1}{3}-(7+2^{\lambda})\eta\right)n$;\\
(iii) if $u\in W_{i_1}^5$ and $\{u_1,u_2\}\subseteq V_{i_2}^{1}\cup V_{i_3}^{1}$,
then there exist at least $3k+3$ vertices in $V_{i_1}^1$ adjacent to $u,u_1$ (and $u_2$);\\
(iv) if $u\in  (W_{i_1}^{\lambda}\cup W_{i_3}^{\lambda}) \setminus S$ and
    $\{u_1,\dots,u_s\}\subseteq V_{i_1}^{\lambda}\cup V_{i_3}^{\lambda}$,
then there exist at least $3k+3$ vertices in $V_{i_2}^{\lambda}$ adjacent to $u_1,\dots,u_s$ (and $u$).
\end{lem}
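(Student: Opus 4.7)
The plan is to exploit two structural facts about the partition constructed in Lemma \ref{lemma3.005}: every vertex $v\in V_i\setminus S$ satisfies $d_G(v)>(\tfrac{2}{3}-6\eta)n$ by definition of $S$, and, because the partition was chosen to minimise $\sum_{i=1}^{3}e(V_i)$, a single-vertex swap argument forces $d_{V_{i_j}}(u)\geq d_{V_{i_1}}(u)$ for every $u\in V_{i_1}$ and every $j\neq 1$. Combined with the size bound $|V_i|\leq\frac{n}{3}+\eta n$ from Lemma \ref{lemma3.005} and the estimates $|S|\leq\eta n$, $|W^{\lambda}|\leq \eta^{2}n/2^{\lambda-1}$ from Lemmas \ref{lemma3.006} and \ref{lemma3.6}, everything else reduces to routine degree counting.

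For (i), the swap inequality gives $d_{V_{i_1}}(u)\leq d_{V_{i_2}}(u)$ and $d_{V_{i_3}}(u)\leq|V_{i_3}|\leq\frac{n}{3}+\eta n$, so
\[
2\,d_{V_{i_2}}(u)\geq d_{V_{i_1}}(u)+d_{V_{i_2}}(u)=d_G(u)-d_{V_{i_3}}(u)\geq\Big(\tfrac{1}{3}-7\eta\Big)n,
\]
which is already stronger than the claimed bound $(\tfrac{1}{9}-5\eta)n$. For (ii) the defining property $d_{V_{i_1}}(u)<2^{\lambda}\eta n$ of $V_{i_1}^{\lambda}$ replaces the swap inequality and yields $d_{V_{i_2}}(u)\geq d_G(u)-d_{V_{i_1}}(u)-|V_{i_3}|\geq (\tfrac{1}{3}-(7+2^{\lambda})\eta)n$ at once.

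For (iii) and (iv) I would apply Bonferroni's inclusion--exclusion to the neighborhoods of the relevant vertices inside the appropriate part. In (iii), the hypothesis $u\in W_{i_1}^{5}$ gives $d_{V_{i_1}}(u)\geq 32\eta n$ directly, while (ii) with $\lambda=1$ gives $d_{V_{i_1}}(u_\tau)\geq(\tfrac{1}{3}-9\eta)n$ for each $u_\tau\in V_{i_2}^{1}\cup V_{i_3}^{1}$; the three-set inclusion--exclusion then produces at least $32\eta n+2(\tfrac{1}{3}-9\eta)n-2|V_{i_1}|\geq 12\eta n$ common neighbors in $V_{i_1}$, and removing $W^{1}\cup S$ (of size at most $\eta^{2}n+\eta n\leq 2\eta n$) still leaves $\geq 10\eta n\geq 3k+3$ vertices in $V_{i_1}^{1}$ for $n$ large. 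Part (iv) follows the same template inside $V_{i_2}$: (i) gives $d_{V_{i_2}}(u)\geq(\tfrac{1}{9}-5\eta)n$ and (ii) gives $d_{V_{i_2}}(u_\tau)\geq(\tfrac{1}{3}-(7+2^{\lambda})\eta)n$ for each $u_\tau$, so the $(s+1)$-fold inclusion--exclusion yields at least $(\tfrac{1}{9}-(5+s(8+2^{\lambda}))\eta)n$ common neighbors, and discarding $W^{\lambda}\cup S$ leaves $\geq 3k+3$ vertices in $V_{i_2}^{\lambda}$.

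The only point that needs care is that the calibrated constant $\eta<\frac{1}{9(120k+48)}$ absorbs the worst case of (iv), namely $s=3k+1$ with $\lambda=5$: there the coefficient $5+s(8+2^{\lambda})$ becomes $120k+45$, and after subtracting $\leq 2\eta n$ for $|W^{5}\cup S|$ one needs $\tfrac{1}{9}-(120k+47)\eta>0$, which is exactly what the chosen bound on $\eta$ supplies (with enough slack that the leftover grows linearly in $n$ and thus exceeds the constant $3k+3$ for all sufficiently large $n$). Beyond this bookkeeping the argument is purely mechanical, drawing nothing new from outside Lemmas \ref{lemma3.005}--\ref{lemma3.6}.
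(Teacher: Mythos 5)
Your proposal is correct and follows essentially the same route as the paper: parts (i)--(ii) rest on the maximality of $\sum_{i<j}e(V_i,V_j)$ (the paper phrases the swap consequence as $d_{V_{i_1}}(u)\leq\frac{1}{3}d_G(u)$ while you use $d_{V_{i_1}}(u)\leq d_{V_{i_2}}(u)$, which even yields a slightly stronger bound in (i)), and parts (iii)--(iv) use the identical union-bound/inclusion--exclusion count on common neighborhoods followed by deletion of $W^{\lambda}\cup S$, with the same worst-case constant $120k+47$ absorbed by the choice of $\eta$.
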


\begin{proof}
For any vertex $v\notin S$, we have $d_G(v)>\big(\frac{2}{3}-6\eta\big)n$.

(i) Since $V(G)=V_1\cup V_2\cup V_3$ is a partition such that $\sum_{1\leq i<j\leq 3}e(V_i,V_j)$ attains the maximum,
$d_{V_{i_1}}(u)\leq \frac{1}{3}d_{G}(u)$.
By Lemma \ref{lemma3.005}, $|V_{i_3}|\leq \big(\frac{1}{3}+\eta\big)n$.
These, together with $d_G(u)> \big(\frac23-6\eta\big)n$, give that
\begin{center}
  $d_{V_{i_2}}(u)
    = d_G(u)-d_{V_{i_1}}(u)-d_{V_{i_3}}(u)
    \geq \frac{2}{3}d_G(u)- \big(\frac{1}{3}+\eta\big)n
    \geq \big(\frac{1}{9}-5\eta\big)n. $
\end{center}

(ii) Since $u\notin S$, $d_G(u)> \big(\frac{2}{3}-6\eta\big)n$.
Moreover, since $u\notin W_{i_1}^{\lambda}$, we have $d_{V_{i_1}}(u)< 2^{\lambda}\eta n$.
Then it follows from $d_{V_{i_3}}(u)\leq |V_{i_3}|\leq \big(\frac{1}{3}+\eta\big)n$ that
\begin{center}
  $ d_{V_{i_2}}(u)= d_G(u)-d_{V_{i_1}}(u)-d_{V_{i_3}}(u)
            \geq \left(\frac{1}{3}-(7+2^{\lambda})\eta\right)n.$
\end{center}

 (iii) Since $u\in W_{i_1}^5$, we have $d_{V_{i_1}}(u)\geq 32\eta n$.
Moreover, Applying $\lambda=1$ to (ii) gives $d_{V_{i_1}}(u_j)\geq\big(\frac{1}{3}-9\eta\big)n$ for any $j\in \{1,2\}$.
Combining these with $|V_{i_1}|\leq \big(\frac{1}{3}+\eta\big)n$ due to Lemma \ref{lemma3.005}, we deduce that
\begin{align*}
 \left|N_{V_{i_1}}(u)\cap N_{V_{i_1}}(u_1)\cap N_{V_{i_1}}(u_2)\right|
           &\geq  |N_{V_{i_1}}(u)|+|N_{V_{i_1}}(u_1)|+|N_{V_{i_1}}(u_2)|-2|V_{i_1}|\\
            &> 32\eta n+2\big(\frac{1}{3}-9\eta\big)n-2\big(\frac{1}{3}+\eta\big)n\\
            &\geq  |W^{1}\cup S|+3k+3
\end{align*}
as $\eta<\frac{1}{9(120k+48)}$, $|W^{1}\cup S|\leq 2\eta n$, and $n$ is sufficiently large.
Then, there exists at least $3k+3$ vertices in $V_{i_1}^{1}$ adjacent to $u,u_1$ (and $u_2$).

(iv) Since $u\in  (W_{i_1}^{\lambda}\cup W_{i_3}^{\lambda}) \setminus S$,
we have $u\in  V_{i}\setminus S$ for some $i\in \{i_1,i_3\}$.
By (i), $d_{V_{i_2}}(u)\geq\big(\frac{1}{9}-5\eta\big)n$.
Moreover, by (ii), $d_{V_{i_2}}(u_j)\geq\left(\frac{1}{3}-(7+2^{\lambda})\eta\right)n$ for any $j\in \{1,\dots,s\}$.
Thus,
\begin{align*}
\left|N_{V_{i_2}}(u)\bigcap\left(\bigcap_{j=1}^{s}N_{V_{i_2}}(u_j)\right)\right|
           &\geq  |N_{V_{i_2}}(u)|+\sum_{j=1}^{s}|N_{V_{i_2}}(u_j)|-s|V_{i_2}|\\
            &>  \big(\frac{1}{9}-5\eta\big)n+s\left(\frac{1}{3}-(7+2^{\lambda})\eta\right)n
                       -s\big(\frac{1}{3}+\eta\big)n\\
            &=  \left(\frac{1}{9}-\left((8+2^{\lambda})(3k+1)+5\right)\eta\right)n\\
            &\geq  |W^{\lambda}\cup S|+3k+3,
\end{align*}
as $\eta<\frac{1}{9(120k+48)}$, $|W^{\lambda}\cup S|\leq 2\eta n$, and $n$ is sufficiently large.
Then, there exist at least $3k+3$ vertices in $V_{i_1}^{\lambda}$ adjacent to $u_1,\dots,u_s$ (and $u$).
\end{proof}
In the following two lemmas, we focus on proving $S=\varnothing$.

\begin{lem}\label{lemma3.8}
 For every $i\in \{1,2,3\}$ and $\lambda\in \{1,5\}$, we have $\nu_i^{\lambda}\leq 1$,
where  $\nu_i^{\lambda}=\nu\big(G[V_i^{\lambda}]\big)$.
Moreover, $G[V_i^{\lambda}]$ contains
an independent set $I_i^{\lambda}$ with $|V_i^{\lambda}\setminus I_i^{\lambda}|\leq 2$.
\end{lem}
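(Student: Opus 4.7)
My plan is to prove $\nu_i^\lambda \le 1$ by contradiction, from which the ``moreover'' part follows immediately. Indeed, any graph with matching number at most $1$ has all its edges sharing a common vertex, so it is either a star or a triangle (with possibly isolated vertices added), and removing at most two vertices (the center of the star, or two vertices of the triangle) leaves an independent set.

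To establish $\nu_i^\lambda \le 1$, assume for contradiction that $G[V_1^\lambda]$ contains a matching $\{u_1u_2, u_3u_4\}$ (by symmetry take $i = 1$); the goal is to embed a copy of $C_{3k+2}^2$ into $G$, contradicting $C_{3k+2}^2$-freeness. Recall from Lemma~\ref{lemma2.004}(iii) the good-4-partition $\{V_1',V_2',V_3',V_4'\}$ of $V(C_{3k+2}^2)$ with $V_4'=\{v_4,v_\ell\}$; the two edges $v_3v_4$ and $v_{3k+1}v_\ell$ are exactly the edges that obstruct a proper $3$-coloring. I would define $\phi: V(C_{3k+2}^2)\to V(G)$ by
\[
\phi(v_3)=u_2,\quad \phi(v_4)=u_1,\quad \phi(v_{3k+1})=u_4,\quad \phi(v_\ell)=u_3,
\]
so that the two obstructing edges are mapped to the two matching edges inside $V_1^\lambda$, and each remaining vertex $v_m$ is intended to go into $V_{c(m)}^\lambda$, where $c(m)\in\{1,2,3\}$ is the class of $v_m$ in the good-4-partition.

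The remaining vertices are then placed greedily in the order $v_5,v_6,\dots,v_{3k},v_1,v_2$. Since $C_{3k+2}^2$ is $4$-regular, the new vertex at each step faces at most four adjacency constraints, each to a vertex already embedded in some $V_j^\lambda$ or in $\{u_1,u_2,u_3,u_4\}\subseteq V_1^\lambda$. By Lemma~\ref{lemma3.7}(ii), every such constraint-vertex has at least $\left(\frac{1}{3}-(7+2^\lambda)\eta\right)n$ neighbors in each other part, and the inclusion-exclusion argument underlying the proof of Lemma~\ref{lemma3.7}(iv) gives a common neighborhood inside $V_{c(m)}$ of size at least $\left(\frac{1}{3}-O(\eta)\right)n$. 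After subtracting $|W^\lambda\cup S|\le 2\eta n$ and the at most $3k+2$ already-used vertices, at least $3k+3$ valid choices remain, given $\eta<1/(9(120k+48))$ and $n$ sufficiently large; hence $\phi$ extends to a full embedding.

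The main subtlety is the final pair $\phi(v_1)\in V_2^\lambda,\ \phi(v_2)\in V_3^\lambda$: because $v_1v_2\in E(C_{3k+2}^2)$, their images must be mutually adjacent in $G$. I would place $\phi(v_1)$ first (three constraints, $u_2,u_3,u_4$) and then $\phi(v_2)$ (four constraints, $u_1,u_2,u_3,\phi(v_1)$), using the same common-neighborhood count. Apart from this bookkeeping and the routine check that at the boundary indices $m\in\{5,6,3k-1,3k\}$ the number of constraints still does not exceed four, no new idea beyond repeated invocation of Lemma~\ref{lemma3.7}(ii) should be needed.
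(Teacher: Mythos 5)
Your proposal is correct and follows essentially the same strategy as the paper's proof: assume a $2$-matching in $G[V_i^{\lambda}]$, build a copy of $C_{3k+2}^2$ whose only two intra-class edges are exactly those two matching edges (all other adjacencies being cross-class and supplied by the common-neighbourhood counts of Lemma~\ref{lemma3.7}), and derive a contradiction; the ``moreover'' part is handled the same elementary way. The only difference is presentational: the paper writes the cycle down explicitly as $u_{1,1}^*u_{1,1}u_{2,1}u_{3,1}u_{1,2}^*u_{1,2}u_{2,2}u_{3,2}\cdots$ and invokes Lemma~\ref{lemma3.7}(iv) twice to pick whole blocks of $V_2^{\lambda}$ and $V_3^{\lambda}$ at once, whereas you recover the same defect $3$-colouring from Lemma~\ref{lemma2.004}(iii) and embed greedily vertex by vertex.
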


\begin{proof}
By symmetry, we may only need to prove that $\nu_1^{\lambda}\leq 1$.
Suppose to the contrary $\nu_1^{\lambda}\ge 2$.
Let $u_{1,1}^*u_{1,1},u_{1,2}^{*}u_{1,2}$ be two independent edges in $G[V_1^{\lambda}]$.
Choose a subset $\widehat{V_1}=\{u_{1,1},u_{1,2},\dots,u_{1,k-2}\}\subseteq V_1^{\lambda}$.
By Lemma \ref{lemma3.7} (iv), there exists a subset
  $\widehat{V_2}=\{u_{2,1},u_{2,2},\dots,u_{2,k-2}\}\subseteq V_2^{\lambda}$
such that all vertices in $\widehat{V_2}$ adjacent to all vertices in $\widehat{V_1}\cup \{u_{1,1}^*,u_{1,2}^*\}$.
Again by Lemma \ref{lemma3.7} (iv), there exists a subset
    $\widehat{V_3}=\{u_{3,1},u_{3,2},\dots,u_{3,k-2}\}\subseteq V_3^{\lambda}$
such that all vertices in $\widehat{V_3}$ adjacent to all vertices in
     $\widehat{V_1}\cup \widehat{V_2}\cup\{u_{1,1}^*,u_{1,2}^*\}$.
Set
 $$C^*=u_{1,1}^*u_{1,1}u_{2,1}u_{3,1}u_{1,2}^*u_{1,2}u_{2,2}u_{3,2}\dots u_{1,k}u_{2,k}u_{3,k}u_{1,1}^*.$$
We can see that $(C^*)^2\cong C_{3k+2}^2$ is a subgraph of $G$, contradicting the fact that $G$ is $C_{3k+2}^2$-free.

Therefore, $\nu_i^{\lambda}\leq 1$.
If $\nu_i^{\lambda}=0$, then $V_i^{\lambda}$ is a desired independent set.
If $\nu_i^{\lambda}=1$, then let $u_{1,1}^*u_{1,1}$
be an edges of $G[V_i]$ and $I_i^{\lambda}=V_i^{\lambda}\setminus\{u_{1,1}^*,u_{1,1}\}.$
Now, if $G[I_i^{\lambda}]$ contains an edge,
then $\nu\big(G[V_i^{\lambda}]\big)\geq 2$,
a contradiction.
So, $I_i^{\lambda}$ is an independent set of $G[V_i^{\lambda}]$.
In both cases, we can see that $|V_i^{\lambda}\setminus I_i^{\lambda}|\leq 2$.
\end{proof}

In the following lemma, we shall prove that $S$ is an empty set.
Since $|W^1|\leq\eta^2 n<n$,
we may select a vertex $v^*$ such that
$x_{v^*}=\max\{x_v~|~v\in V(G)\setminus W^1\}$.
We claim that $v^*\notin S$. Indeed, recall that $x_{u^*}=\max\{x_v~|~v\in V(G)\}$. Then
$\rho(G)x_{u^*}\le |W^1|x_{u^*}+(n-|W^1|)x_{v^*}.$
Combining this with $\rho(G)\ge\frac{2n}{3}$ due to Lemma \ref{lemma3.004}, we  obtain that
\begin{equation}\label{align6}
x_{v^*}\ge \frac{\rho(G)-|W^1|}{n-|W^1|}x_{u^*}\ge \frac{\rho(G)-|W^1|}{n}x_{u^*}
    >\frac{2}{3}\big(1-\eta\big)x_{u^*}>\frac{3}{5}x_{u^*},
\end{equation}
where the last inequality holds by $\eta<\frac{1}{9(120k+48)}$.
On the other hand,
\begin{center}
  $\rho(G)x_{v^*}
=\sum\limits_{v\in N_{W^1}(v^*)}x_v+\sum\limits_{v\in N_{G-W^1}(v^*)}x_v
\le |W^1|x_{u^*}+d_G(v^*)x_{v^*}.$
\end{center}
Combining this with $x_{v^*}>\frac{3}{5}x_{u^*}$,
$\rho(G)\ge\frac{2}{3}n$ and $|W^1|\leq  \eta^2n<\frac12\eta n$, we obtain
\begin{center}
 $d_G(v^*)\ge \rho(G)-\frac{x_{u^*}}{x_{v^*}}|W^1|\geq\rho(G)-\frac{5}{3}|W^1|> \Big(\frac{2}{3}-\frac{5}{6}\eta\Big)n.$
\end{center}
Recall that $S=\{v\in V(G)~|~d_G(v)\leq \big(\frac23-6\eta\big)n\}$.
Then $v^*\notin S$, and so $v^*\in V(G)\setminus (W^1\cup S)$.

We may assume without loss of generality that $v^*\in V_{1}^1$.
Then by the definition of $W^1$, we have $|N_{V_1^1}(v^*)|\leq |N_{V_1}(v^*)|<2\eta n$.
Thus,
\begin{align*}
 \rho(G)x_{v^*}&=  \sum_{v\in N_{S\cup W^1}(v^*)}x_v+
                    \sum_{v\in N_{V_1^1}(v^*)}x_v+
                    \sum_{v\in N_{V_2^1\cup V_3^1}(v^*)}x_v \nonumber\\
                    &<  \big(|W^1|x_{u^*}+|S|x_{v^*}\big)+2\eta nx_{v^*}+\sum_{i=2}^{3}\sum_{v\in  V_{i}^1\setminus I_{i}^{1}}x_v+\sum_{v\in I_2^{1}\cup I_3^{1}}x_v\nonumber\\
&\le  \big(|W^1|x_{u^*}+|S|x_{v^*}\big)+3\eta nx_{v^*}+
\sum_{v\in I_2^{1}\cup I_3^{1}}x_v,
\end{align*}
where $I_{i}^{1}$ is an independent set of
$G[V_{i}^1]$
such that $\big|V_{i}^1\setminus I_{i}^{1}\big|\leq2$ (see Lemma \ref{lemma3.8}).
Subsequently,
\begin{align}\label{align7}
\sum_{v\in I_2^{1}\cup I_3^{1}}x_v
>\big(\rho(G)-|S|-3\eta n\big)x_{v^*}-|W^1|x_{u^*}.
\end{align}

\begin{lem}\label{lemma3.9}
$S=\varnothing$.
\end{lem}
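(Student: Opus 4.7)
The plan is to assume for contradiction that $S\neq\varnothing$, to fix an arbitrary $w\in S$, and to modify $G$ only at $w$, producing a graph $G^{*}$ that remains $C_{3k+2}^{2}$-free but satisfies $\rho(G^{*})>\rho(G)$, contradicting the extremality of $G$. Guided by the form of (\ref{align7}), the natural modification is
\[
G^{*}\;:=\;\bigl(G-\{wu:u\in N_{G}(w)\}\bigr)\;+\;\{wu:u\in I_{2}^{1}\cup I_{3}^{1}\},
\]
that is, delete every edge at $w$ and reconnect $w$ to every vertex of the two independent sets produced by Lemma~\ref{lemma3.8}. This is well-defined because $I_{j}^{1}\subseteq V_{j}\setminus(W^{1}\cup S)$ forces $w\notin I_{2}^{1}\cup I_{3}^{1}$.

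For the spectral step I would take the Perron vector $X$ of $G$ as a test vector and use
\[
X^{T}A(G^{*})X-X^{T}A(G)X\;=\;2x_{w}\Bigl(\sum_{u\in I_{2}^{1}\cup I_{3}^{1}}x_{u}-\rho(G)x_{w}\Bigr).
\]
I would bound the sum from below by (\ref{align7}). For $\rho(G)x_{w}=\sum_{u\in N_{G}(w)}x_{u}$ the naive bound $\rho(G)x_{w}\le d_{G}(w)x_{u^{*}}$ is too weak, so I would split $N_{G}(w)=(N_{G}(w)\cap W^{1})\cup(N_{G}(w)\setminus W^{1})$: the first part contributes at most $|W^{1}|x_{u^{*}}\le\eta^{2}n\,x_{u^{*}}$, and the second at most $(\tfrac{2}{3}-6\eta)n\cdot x_{v^{*}}$, since outside $W^{1}$ each coordinate of $X$ is bounded by $x_{v^{*}}$. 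Substituting and combining with Lemma~\ref{lemma3.004} ($\rho(G)\ge\tfrac{2n}{3}$), Lemma~\ref{lemma3.006} ($|S|\le\eta n$), Lemma~\ref{lemma3.6} ($|W^{1}|\le\eta^{2}n$), and (\ref{align6}), the parenthesized quantity becomes at least of order $2\eta n\cdot x_{v^{*}}$ up to lower-order corrections, hence strictly positive. This gives $\rho(G^{*})>\rho(G)$.

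For the forbidden-subgraph step I would show $G^{*}$ is still $C_{3k+2}^{2}$-free. Suppose a copy $C$ exists under an embedding $\phi$. Since $G^{*}-w=G-w\subseteq G$, the copy must use $w$; write $w=\phi(v_{i})$. Then the four cycle-neighbors $v_{i\pm1},v_{i\pm2}$ of $v_{i}$ in $C_{3k+2}^{2}$ are mapped into $N_{G^{*}}(w)=I_{2}^{1}\cup I_{3}^{1}\subseteq V_{2}\cup V_{3}$. By Lemma~\ref{lemma3.7}(ii) each such image has at least $(\tfrac{1}{3}-9\eta)n$ neighbors in $V_{1}$, so by a four-fold inclusion-exclusion their common neighborhood in $V_{1}$ has size at least $\tfrac{n}{3}-O(\eta)n$, easily exceeding $(3k+2)+|W^{1}\cup S|$. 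I therefore pick $u^{\dagger}\in V_{1}^{1}\setminus V(C)$ adjacent in $G$ to all four images. Setting $\phi'(v_{i})=u^{\dagger}$ and $\phi'=\phi$ elsewhere produces an injective embedding of $C_{3k+2}^{2}$ into $G$: every cycle-edge at $v_{i}$ is supplied by the chosen adjacencies, and every other required edge avoids $w$ and therefore agrees in $G$ and $G^{*}$. This contradicts the $C_{3k+2}^{2}$-freeness of $G$.

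The main obstacle I anticipate is the spectral step: because (\ref{align6}) only guarantees $x_{v^{*}}\ge\tfrac{2}{3}(1-\eta)x_{u^{*}}$, using the crude bound $\rho(G)x_{w}\le d_{G}(w)x_{u^{*}}$ produces a negative contribution that cannot be offset by (\ref{align7}). The refined split of $N_{G}(w)$ against $W^{1}$ is what saves positivity, and making this cancellation precise against the smallness of $\eta$ is the technical heart of the argument.
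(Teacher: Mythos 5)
Your proposal is correct and follows essentially the same route as the paper: the identical modification (delete all edges at a vertex of $S$, rejoin it to $I_2^{1}\cup I_3^{1}$), the same replacement argument for $C_{3k+2}^2$-freeness (your use of Lemma~\ref{lemma3.7}(ii) plus inclusion--exclusion is just an unpacking of Lemma~\ref{lemma3.7}(iv), which the paper cites directly), and the same spectral estimate, including the key split of $N_G(w)$ into its $W^1$ part and its complement before combining with \eqref{align6}, \eqref{align7}, Lemma~\ref{lemma3.004} and Lemma~\ref{lemma3.006}.
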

\begin{proof}
Suppose to the contrary that there exists a vertex $u_0\in S$.
We may assume without loss of generality that $u_0\in V_1$.
Let $G'$ be the graph obtained from $G$ by deleting edges incident to $u_0$
and joining all possible edges from $I_2^{1}\cup I_3^{1}$ to $u_0$.

We claim that $G'$ is $C_{3k+2}^2$-free.
Otherwise, $G'$ contains a subgraph $H$ isomorphic to $C_{3k+2}^2$.
From the construction of $G'$,
we can see that $u_0\in V(H)$.
Assume that $N_{H}(u_0)=\{u_1,u_2,\dots,u_a\}$,
then $a\leq |V(H)|-1=3k+1$ and $u_1,u_2,\dots,u_a\in I_2^{1}\cup I_3^{1}$ by the definition of $G'$.
Clearly, $I_i^{1}\subseteq V_i^{1}$ for each $i\in \{2,3\}$.
By Lemma \ref{lemma3.7} (iv), we can select a vertex $u\in V_1^1\setminus V(H)$ adjacent to $u_1,u_2,\dots,u_a$.
This implies that $G[(V(H)\setminus \{u_0\})\cup\{u\}]$ contains a copy of $C_{3k+2}^2$, a contradiction.
Therefore, the above claim holds.

In what follows, we shall obtain a contradiction by showing that $\rho(G')>\rho(G)$.
By the definition of $S$, $d_G(u_0)\le (\frac{2}{3}-6\eta)n$.
Combining this with Lemmas \ref{lemma3.004} and \ref{lemma3.006},
we have
\begin{align}\label{align8}
\rho(G)-d_G(u_0)-|S|>5\eta n.
\end{align}
Besides,
\begin{align}\label{align9}
\sum_{v\in N_G(u_0)}x_{v}=\sum_{v\in N_{W^1}(u_0)}x_v+\sum_{v\in N_{G-W^1}(u_0)}x_v
\le |W^1|x_{u^*}+d_G(u_0)x_{v^*}.
\end{align}
Recall that $x_{v^*}>\frac35x_{u^*}$ and $|W^1|\le \eta^2 n$.
Then combining this with \eqref{align7}, \eqref{align8} and \eqref{align9},
we get that
\begin{align*}
    \sum_{v\in I_2\cup I_3}x_{v}-\sum_{v\in N_G(u_0)}x_{v}
    &\ge  \sum_{v\in I_2\cup I_3}x_v-\big(|W^1|x_{u^*}+d_G(u_0)x_{v^*}\big) \nonumber\\
    &>  \big(\rho(G)\!-\!d_G(u_0)\!-\!|S|\!-3\eta n\big)x_{v^*}\!-\!|W^1|x_{u^*}  \nonumber\\
  &> 2\eta n\cdot\frac{3}{5}x_{u^*}-\eta^2 nx_{u^*}>0
\end{align*}
for $n$ sufficiently large. Thus,
\begin{center}
  $\rho(G')-\rho(G) \ge  X^T\big(A(G')-A(G)\big)X
                  = 2x_{u_0}\left(\sum\limits_{v\in I_2\cup I_3}x_{v}-\sum\limits_{v\in N_G(u_0)}x_v\right)>0,$
\end{center}
contradicting the fact that $G$ is extremal with respect to $spex(n,C_{3k+2}^2)$.
Hence, $S=\varnothing$.
\end{proof}

In the following lemmas, we shall give exact characterizations of $W^5$.

\begin{lem}\label{lemma3.10}
$|W^5|\leq 1$.
\end{lem}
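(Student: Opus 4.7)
The plan is to argue by contradiction. Suppose $|W^5|\ge 2$ and fix two distinct vertices $u,u'\in W^5$; the goal is to embed a copy of $C^2_{3k+2}$ in $G$, contradicting the $C^2_{3k+2}$-freeness of $G$. Since $S=\varnothing$ by Lemma~\ref{lemma3.9} and $W^5\subseteq W^1$, each of $u,u'$ has at least $32\eta n$ neighbors in its own part, and hence (using $|W^1|\le\eta^2 n$ from Lemma~\ref{lemma3.6}) at least $30\eta n$ of them lie inside $V_i^1$.

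Case A: $u,u'\in W_j^5$ for a common $j$, WLOG $j=1$. I would select distinct $v_1\in N_{V_1}(u)\cap V_1^1$ and $v_2\in N_{V_1}(u')\cap V_1^1$ with $\{v_1,v_2\}\cap\{u,u'\}=\varnothing$. The edges $uv_1$ and $u'v_2$ are two independent edges of $G[V_1]$, playing the role of the two independent edges of $G[V_1^\lambda]$ used in the proof of Lemma~\ref{lemma3.8}. I would then mirror that proof: set $u^*_{1,1}:=u$, $u_{1,1}:=v_1$, $u^*_{1,2}:=u'$, $u_{1,2}:=v_2$, and select the remaining vertices $u_{2,j},u_{3,j}\in V_2^1,V_3^1$ and $u_{1,j}\in V_1^1$ for $j\ge 3$ one at a time by iterated application of Lemma~\ref{lemma3.7}(iv) with $\lambda=1$. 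Since $W^5\subseteq W^1$, each of $u,u'$ fits the hypothesis of Lemma~\ref{lemma3.7}(iv) as the ``$W$-vertex'', and the guarantee of at least $3k+3$ common neighbors at every step leaves ample room to avoid previously chosen vertices. This yields $(C^*)^2\cong C^2_{3k+2}$ inside $G$, the desired contradiction.

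Case B: $u\in W_{j_1}^5$, $u'\in W_{j_2}^5$ with $j_1\ne j_2$, WLOG $(j_1,j_2)=(1,2)$. I would pick $v_1\in N_{V_1}(u)\cap V_1^1$ and $v_2\in N_{V_2}(u')\cap V_2^1$; this produces one independent edge inside $G[V_1]$ and one inside $G[V_2]$. I would then embed $C^2_{3k+2}$ using the natural periodic $3$-coloring of $C^2_{3k+2}$, whose two unavoidable wrap-around conflicts lie, as a routine check shows, one in $V_1$ and one in $V_2$. I would map the $V_1$-conflict pair to $\{u,v_1\}$ and the $V_2$-conflict pair to $\{u',v_2\}$, positioning $u$ and $u'$ in the cycle so that they are not forced to be adjacent in $G$, and then fill in the remaining positions by repeated application of Lemma~\ref{lemma3.7}(iv). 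Once more we obtain $C^2_{3k+2}\subseteq G$, a contradiction.

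In either case we reach $C^2_{3k+2}\subseteq G$, so $|W^5|\le 1$.

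Main obstacle: the delicate point, most clearly visible in Case A, is that some position of the embedded cycle may be forced to be a common $V_2$- or $V_3$-neighbor of both $u$ and $u'$, but Lemma~\ref{lemma3.7}(i) only yields $d_{V_i}(u),d_{V_i}(u')\ge(\tfrac{1}{9}-5\eta)n$ for a $W$-vertex in a non-native part, and since $\tfrac{2}{9}<\tfrac{1}{3}$ the naive union bound cannot guarantee such a common neighbor. The fix is to re-space the cyclic positions of $u$ and $u'$ so that their $C^2$-neighborhoods in the embedded cycle are disjoint (cyclic distance at least $5$); one must then verify that the resulting assignment of parts $V_1,V_2,V_3$ remains consistent with every triangle $w_{i-1}w_iw_{i+1}$ of $C^2_{3k+2}$ and that Lemma~\ref{lemma3.7}(iv) can still be iterated to produce the remaining vertices of the embedding. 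This bookkeeping is the technical crux of the argument.
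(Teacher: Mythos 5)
Your overall strategy (suppose $|W^5|\ge 2$, split according to whether the two heavy vertices lie in the same part, and embed $C_{3k+2}^2$) matches the paper's, and you have correctly located the crux: some positions of the embedded cycle are forced to be common neighbours of both $W^5$-vertices $u,u'$, while Lemma~\ref{lemma3.7}(i) only gives each of them $(\tfrac19-5\eta)n$ neighbours in a foreign part, so a union bound fails. But your proposed fix --- re-spacing $u$ and $u'$ so that their square-neighbourhoods on the cycle are disjoint --- does not close the gap. In Case A the two intra-$V_1$ edges split the cycle into two arcs whose lengths are $\equiv 1\pmod 3$ and sum to $3k+2$, so the cyclic distance between $u$ and $u'$ is $3s+1$ for some $s$ with $0\le s\le k$ on one side and $3(k-s)+1$ on the other; disjointness of the square-neighbourhoods needs both to be at least $5$, i.e.\ $s\ge 2$ and $k-s\ge 2$, which is impossible for $k\in\{2,3\}$ (i.e.\ $\ell=8,11$). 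The same periodicity constraints block separation in Case B for small $k$: the two conflict edges of any near-$3$-colouring with one conflict in $V_1$ and one in $V_2$ are forced close together around the wrap-around. Since the lemma must hold for all $k\ge 2$, the argument as proposed fails exactly where it is hardest, and for $k=2$ one even needs \emph{two} positions adjacent to both $u$ and $u'$.

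The missing idea, which is how the paper proceeds, is to prove that $u$ and $u'$ actually \emph{do} have a large common neighbourhood in a suitable part, rather than trying to avoid needing one. Since $S=\varnothing$, $|N_G(u)\cap N_G(u')|\ge 2(\tfrac23-6\eta)n-n=(\tfrac13-12\eta)n$; if the common neighbourhood were small in both $V_2$ and $V_3$ it would be large in $V_1$, and then the max-cut property of the partition ($d_{V_2}(v)\ge d_{V_1}(v)$ for $v\in V_1$) forces $|N_{V_2}(u)|,|N_{V_2}(u')|\ge(\tfrac13-16\eta)n$, whence $|N_{V_2}(u)\cap N_{V_2}(u')|\ge(\tfrac13-33\eta)n$, a contradiction. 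The positions of the cycle that must see both $u$ and $u'$ are then drawn from this common neighbourhood, and the remaining positions are filled by Lemma~\ref{lemma3.7}(iii)--(iv) exactly as you describe. In the cross-part case the paper runs the analogous dichotomy on $|N_{V_2}(u)\cap N_{V_2}(u')|$ and uses two different cycle patterns in the two subcases. Without some version of this common-neighbourhood argument your proof does not go through.
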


\begin{proof}
Suppose to the contrary that $|W^5|\geq 2$.
Then we can divide the proof into the following two cases.

\noindent{\textbf{Case 1.  For some $i\in \{1,2,3\}$, $|W_i^5|\geq 2$.}}

\begin{figure}[hpt]
\centering
\begin{tikzpicture}[x=1.00mm, y=1.00mm, inner xsep=0pt, inner ysep=0pt, outer xsep=0pt, outer ysep=0pt]
\path[line width=0mm] (41.09,82.77) rectangle +(73.84,45.99);
\definecolor{L}{rgb}{0,0,0}
\path[line width=0.30mm, draw=L] (70.02,120.23);
\path[line width=0.30mm, draw=L] (75.05,120.06);
\path[line width=0.30mm, draw=L] (80.00,120.00) -- (90.00,110.00);
\path[line width=0.30mm, draw=L] (90.00,110.00) -- (90.00,100.00);
\path[line width=0.30mm, draw=L] (70.00,90.00) -- (80.00,90.00);
\path[line width=0.30mm, draw=L] (80.00,90.00) -- (90.00,100.00);
\draw(92.88,109.47) node[anchor=base west]{\fontsize{14.23}{17.07}\selectfont $u_{2,1}$};
\draw(92.88,99.47) node[anchor=base west]{\fontsize{14.23}{17.07}\selectfont $u_{3,1}$};
\draw(80.16,85.55) node[anchor=base west]{\fontsize{14.23}{17.07}\selectfont $u_{1,2}^*$};
\draw(65.94,122.84) node[anchor=base west]{\fontsize{14.23}{17.07}\selectfont $u_{1,1}^*$};
\path[line width=0.30mm, draw=L] (80.00,120.00) -- (90.00,100.00);
\path[line width=0.30mm, draw=L] (90.00,110.00) -- (80.00,90.00);
\path[line width=0.30mm, draw=L] (70.00,120.00) -- (90.00,110.00);
\definecolor{L}{rgb}{0,0,1}
\definecolor{F}{rgb}{0,0,1}
\path[line width=0.30mm, draw=L, fill=F] (89.62,110.00) circle (1.00mm);
\definecolor{L}{rgb}{0,0,0}
\definecolor{F}{rgb}{0,0,0}
\path[line width=0.30mm, draw=L, fill=F] (90.00,100.00) circle (1.00mm);
\path[line width=0.30mm, draw=L] (75.57,120.58);
\path[line width=0.30mm, draw=L] (89.32,99.97);
\draw(78.91,122.49) node[anchor=base west]{\fontsize{14.23}{17.07}\selectfont $u_{1,1}$};
\draw(66.08,85.55) node[anchor=base west]{\fontsize{14.23}{17.07}\selectfont $u_{1,2}$};
\path[line width=0.30mm, draw=L] (70.00,120.00) -- (80.00,120.00);
\path[line width=0.30mm, draw=L] (60.00,100.00) -- (70.00,90.00);
\path[line width=0.30mm, draw=L] (60.00,110.00) -- (60.00,100.00);
\path[line width=0.30mm, draw=L] (60.00,110.00) -- (70.00,120.00);
\path[line width=0.30mm, draw=L, fill=F] (60.00,110.00) circle (1.00mm);
\draw(50.68,99.47) node[anchor=base west]{\fontsize{14.23}{17.07}\selectfont $u_{2,2}$};
\draw(50.68,109.47) node[anchor=base west]{\fontsize{14.23}{17.07}\selectfont $u_{3,2}$};
\path[line width=0.30mm, draw=L] (70.00,90.00) -- (90.00,100.00);
\path[line width=0.30mm, draw=L] (80.00,90.00) -- (60.00,100.00);
\path[line width=0.30mm, draw=L] (60.00,110.00) -- (70.00,90.00);
\path[line width=0.30mm, draw=L] (60.00,110.00) -- (80.00,120.00);
\path[line width=0.30mm, draw=L] (70.00,120.00) -- (60.00,100.00);
\definecolor{L}{rgb}{1,0,0}
\definecolor{F}{rgb}{1,0,0}
\path[line width=0.30mm, draw=L, fill=F] (80.00,90.00) circle (1.00mm);
\path[line width=0.30mm, draw=L, fill=F] (70.00,90.00) circle (1.00mm);
\definecolor{L}{rgb}{0,0,1}
\definecolor{F}{rgb}{0,0,1}
\path[line width=0.30mm, draw=L, fill=F] (60.00,100.00) circle (1.00mm);
\definecolor{L}{rgb}{1,0,0}
\definecolor{F}{rgb}{1,0,0}
\path[line width=0.30mm, draw=L, fill=F] (70.00,120.00) circle (1.00mm);
\path[line width=0.30mm, draw=L, fill=F] (80.00,120.00) circle (1.00mm);
\end{tikzpicture}
\caption{The subgraph $H_1$ in $G$.}{\label{figure.2}}
\end{figure}
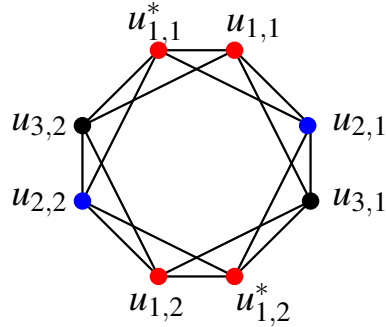

We may assume without loss of generality that $|W_1^5|\geq 2$ and $u_{1,1}^*,u_{1,2}^*\in W_1^5$.
Recall that $d_G(u)> \big(\frac{2}{3}-6\eta\big)n$ for any $u\in V(G)$ as $S=\varnothing$.
Then,
\begin{center}
  $|N_G(u_{1,1}^*)\cap N_G(u_{1,2}^*)|\geq 2\big(\frac{2}{3}-6\eta\big)n-n=\big(\frac{1}{3}-12\eta\big)n.$
\end{center}
We first claim that $|N_{V_i}(u_{1,1}^*)\cap N_{V_i}(u_{1,2}^*)|\geq 2\eta n$ for some $i\in \{2,3\}$.
Otherwise, $|N_{V_i}(u_{1,1}^*)\cap N_{V_i}(u_{1,2}^*)|< 2\eta n$ for each $i\in \{2,3\}$, which implies that
\begin{center}
  $|N_{V_1}(u_{1,1}^*)\cap N_{V_1}(u_{1,2}^*)|
      =|N_G(u_{1,1}^*)\cap N_G(u_{1,2}^*)|-\sum\limits_{i=2}^{3}|N_{V_i}(u_{1,1}^*)\cap N_{V_i}(u_{1,2}^*)|
      \geq \big(\frac{1}{3}-16\eta\big)n.$
\end{center}
Recall that $V(G)=V_1\cup V_2\cup V_3$ is a partition of $G$ such that $\sum_{1\leq i<j\leq 3}e(V_i,V_j)$ attains the maximum. Then for any $u\in \{u_{1,1}^*,u_{1,2}^*\}$, we have
\begin{center}
 $|N_{V_2}(u)|\geq |N_{V_1}(u)|\geq |N_{V_1}(u_{1,1}^*)\cap N_{V_1}(u_{1,2}^*)| \geq \big(\frac{1}{3}-16\eta\big)n.$
\end{center}
 Combining this with $|V_2|\le (\frac{1}{3}+\eta)n$ due to Lemma \ref{lemma3.005}, we obtain that
\begin{center}
  $|N_{V_2}(u_{1,1}^*)\cap N_{V_2}(u_{1,2}^*)|
     \geq 2\big(\frac{1}{3}-16\eta\big)n-\big(\frac{1}{3}+\eta\big)n
     =\big(\frac{1}{3}-33\eta\big)n>2\eta n,$
\end{center}
a contradiction.
Hence, the claim holds.

Now we may assume without loss of generality that $|N_{V_2}(u_{1,1}^*)\cap N_{V_2}(u_{1,2}^*)|\geq 2\eta n$.
Recall that $|W_2^1|\leq \eta^2 n$.
Thus,  there exist two vertices $u_{2,1},u_{2,2}\in (N_{V_2}(u_{1,1}^*)\cap N_{V_2}(u_{1,2}^*))\setminus W^1$.
Moreover, since $S=\varnothing$, we have $u_{2,1},u_{2,2}\in V_2^1$.
Recall that $u_{1,1}^*,u_{1,2}^*\in W_1^5$.
By Lemma \ref{lemma3.7} (iii), there exist a vertex $u_{1,1}$ in $V_1^1$ adjacent to $u_{1,1}^*$ and $u_{2,1}$, and a vertex $u_{1,2}$ ($u_{1,2}\neq u_{1,1}$) in $V_1^1$ adjacent to $u_{1,2}^*$ and
$u_{2,2}$ (see Figure \ref{figure.2}).
By Lemma \ref{lemma3.7} (iv), there exist a vertex $u_{3,1}\in V_3^1$ adjacent to $u_{1,1},u_{2,1},u_{1,2}^*,u_{1,2}$,
and a vertex $u_{3,2}\in V_3^1$ ($u_{3,2}\neq u_{3,1}$) adjacent to $u_{1,2},u_{2,2},u_{1,1}^*,u_{1,1}$.
If $k=2$, then $G$ contains an $H_1$ isomorphic to $ C_{8}^2$ (see Figure \ref{figure.2}), a contradiction.
We now assume that $k\geq 3$.
By Lemma \ref{lemma3.7} (iv), we can recursively select subsets $\widehat{V_i}=\{v_{i,1},v_{i,2},\dots,v_{i,k-2}\}$ in $V_i^1\setminus V(H_1)$
for each $i\in \{1,2,3\}$ such that all vertices in $\widehat{V_1}$ adjacent to $u_{2,2}$ and $u_{3,2}$;
all vertices in $\widehat{V_2}$ adjacent to all vertices in $\widehat{V_1}\cup \{u_{1,1}^*,u_{1,1},u_{3,2}\}$;
all vertices in $\widehat{V_3}$ adjacent to all vertices in $\widehat{V_1}\cup \widehat{V_2}\cup\{u_{1,1}^*,u_{1,1},u_{2,2}\}$.
Set $$C^1=u_{1,1}^*u_{1,1}u_{2,1}u_{3,1}u_{1,2}^*u_{1,2}u_{2,2}u_{3,2}v_{1,1}v_{2,1}v_{3,1}\dots v_{1,k-2}v_{2,k-2}v_{3,k-2}u_{1,1}^*.$$
We can see that $(C^1)^2\cong C_{3k+2}^2$ is a subgraph of $G$, contradicting the fact that $G$ is $C_{3k+2}^2$-free.

\noindent{\textbf{Case 2. For each $i\in \{1,2,3\}$, $|W_i^5|\leq 1$.}}

Together with $|W^5|\geq 2$, we may assume that $u_{1,1}^*\in W_1^5$ and $u_{3,1}^*\in W_3^5$.
Now we consider the following two subcases.

\noindent{\textbf{Subcase 2.1. $|N_{V_2}(u_{1,1}^*)\cap N_{V_2}(u_{3,1}^*)|\geq 20\eta n$.}}

Then, there exist vertices $u_{2,1}$ and $u_{2,2}$ in $V_2^1$ adjacent to $u_{1,1}^*$ and $u_{3,1}^*$ since $|W_2^1|<|W^1|<\eta^2n$.
Using Lemma \ref{lemma3.7} (iii) recursively, there exists a vertex $u_{1,1}\in V_1^1$ adjacent to $u_{1,1}^*,u_{2,1}$;
and a vertex $u_{3,1}\in V_3^1$ adjacent to $u_{1,1},u_{2,1},u_{3,1}^*$.
By Lemma \ref{lemma3.7} (iv), we can recursively select a sequence of vertices $u_{1,2},u_{3,2}$
such that the vertex $u_{1,2}\in V_1^1$ is adjacent to $u_{3,1},u_{3,1}^*,u_{2,2}$;
the vertex $u_{3,2}\in V_3^1$ is adjacent to $u_{1,2},u_{2,2},u_{1,1}^*,u_{1,1}$ (see Figure \ref{figure.3}).
If $k=2$, then $G$ contains an $H_2$ isomorphic to $C_{8}^2$, a contradiction.
Assume now that $k\geq 3$.
We give a similar definition of $\widehat{V_i}$ in $V_i^1\setminus V(H_2)$ for each $i\in \{1,2,3\}$ as Case 1 and set
 $$C^2=u_{1,1}^*u_{1,1}u_{2,1}u_{3,1}u_{3,1}^*u_{1,2}u_{2,2}u_{3,2}v_{1,1}v_{2,1}v_{3,1}\dots v_{1,k-2}v_{2,k-2}v_{3,k-2}u_{1,1}^*.$$
We can see that $(C^2)^2\cong C_{3k+2}^2$ is a subgraph of $G$, contradicting the fact that $G$ is $C_{3k+2}^2$-free.

\begin{figure}[hpt]
\centering
\begin{tikzpicture}[x=1.00mm, y=1.00mm, inner xsep=0pt, inner ysep=0pt, outer xsep=0pt, outer ysep=0pt]
\path[line width=0mm] (41.09,82.77) rectangle +(78.69,45.99);
\definecolor{L}{rgb}{0,0,0}
\path[line width=0.30mm, draw=L] (70.02,120.23);
\path[line width=0.30mm, draw=L] (75.05,120.06);
\path[line width=0.30mm, draw=L] (80.00,120.00) -- (90.00,110.00);
\path[line width=0.30mm, draw=L] (90.00,110.00) -- (90.00,100.00);
\path[line width=0.30mm, draw=L] (70.00,90.00) -- (80.00,90.00);
\path[line width=0.30mm, draw=L] (80.00,90.00) -- (90.00,100.00);
\draw(91.98,109.47) node[anchor=base west]{\fontsize{14.23}{17.07}\selectfont $u_{2,1}$};
\draw(91.88,98.02) node[anchor=base west]{\fontsize{14.23}{17.07}\selectfont $u_{3,1}$};
\draw(80.16,85.55) node[anchor=base west]{\fontsize{14.23}{17.07}\selectfont $u_{3,1}^*$};
\draw(65.94,122.84) node[anchor=base west]{\fontsize{14.23}{17.07}\selectfont $u_{1,1}^*$};
\path[line width=0.30mm, draw=L] (80.00,120.00) -- (90.00,100.00);
\path[line width=0.30mm, draw=L] (90.00,110.00) -- (80.00,90.00);
\path[line width=0.30mm, draw=L] (70.00,120.00) -- (90.00,110.00);
\definecolor{L}{rgb}{0,0,1}
\definecolor{F}{rgb}{0,0,1}
\path[line width=0.30mm, draw=L, fill=F] (89.62,110.00) circle (1.00mm);
\definecolor{L}{rgb}{0,0,0}
\definecolor{F}{rgb}{0,0,0}
\path[line width=0.30mm, draw=L, fill=F] (90.00,100.00) circle (1.00mm);
\path[line width=0.30mm, draw=L] (75.57,120.58);
\path[line width=0.30mm, draw=L] (89.32,99.97);
\draw(78.91,122.49) node[anchor=base west]{\fontsize{14.23}{17.07}\selectfont $u_{1,1}$};
\draw(66.08,85.85) node[anchor=base west]{\fontsize{14.23}{17.07}\selectfont $u_{1,2}$};
\path[line width=0.30mm, draw=L] (70.00,120.00) -- (80.00,120.00);
\path[line width=0.30mm, draw=L] (60.00,100.00) -- (70.00,90.00);
\path[line width=0.30mm, draw=L] (60.00,110.00) -- (60.00,100.00);
\path[line width=0.30mm, draw=L] (60.00,110.00) -- (70.00,120.00);
\path[line width=0.30mm, draw=L, fill=F] (60.00,110.00) circle (1.00mm);
\draw(50.68,99.47) node[anchor=base west]{\fontsize{14.23}{17.07}\selectfont $u_{2,2}$};
\draw(50.68,109.47) node[anchor=base west]{\fontsize{14.23}{17.07}\selectfont $u_{3,2}$};
\path[line width=0.30mm, draw=L] (70.00,90.00) -- (90.00,100.00);
\path[line width=0.30mm, draw=L] (80.00,90.00) -- (60.00,100.00);
\path[line width=0.30mm, draw=L] (60.00,110.00) -- (70.00,90.00);
\path[line width=0.30mm, draw=L] (60.00,110.00) -- (80.00,120.00);
\path[line width=0.30mm, draw=L] (70.00,120.00) -- (60.00,100.00);
\path[line width=0.30mm, draw=L, fill=F] (80.00,90.00) circle (1.00mm);
\definecolor{L}{rgb}{1,0,0}
\definecolor{F}{rgb}{1,0,0}
\path[line width=0.30mm, draw=L, fill=F] (70.00,90.00) circle (1.00mm);
\definecolor{L}{rgb}{0,0,1}
\definecolor{F}{rgb}{0,0,1}
\path[line width=0.30mm, draw=L, fill=F] (60.00,100.00) circle (1.00mm);
\definecolor{L}{rgb}{1,0,0}
\definecolor{F}{rgb}{1,0,0}
\path[line width=0.30mm, draw=L, fill=F] (70.00,120.00) circle (1.00mm);
\path[line width=0.30mm, draw=L, fill=F] (80.00,120.00) circle (1.00mm);
\end{tikzpicture}
\caption{The subgraph $H_2$ in $G$.}{\label{figure.3}}
\end{figure}
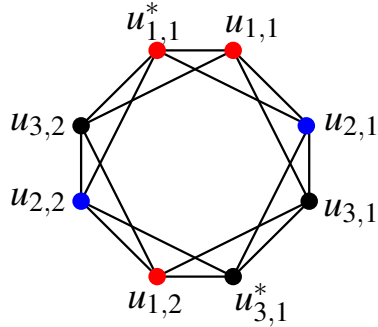

\noindent{\textbf{Subcase 2.2. $|N_{V_2}(u_{1,1}^*)\cap N_{V_2}(u_{3,1}^*)|< 20\eta n$.}}

For any integer $i\in \{1,3\}$, we first claim that $|N_{V_i}(u_{1,1}^*)\cap N_{V_i}(u_{3,1}^*)|\geq 20\eta n$.
By symmetry, it suffices to prove that $|N_{V_1}(u_{1,1}^*)\cap N_{V_1}(u_{3,1}^*)|\geq 20\eta n$.
Suppose to the contrary that $|N_{V_1}(u_{1,1}^*)\cap N_{V_1}(u_{3,1}^*)|< 20\eta n$.
Since $S$ is empty, $d_G(u)> \big(\frac{2}{3}-6\eta\big)n$ for any $u\in V(G)$.
Then, $|N_G(u_{1,1}^*)\cap N_G(u_{3,1}^*)|\geq 2\big(\frac{2}{3}-6\eta\big)n-n=\big(\frac{1}{3}-12\eta\big)n$.
Thus,
\begin{center}
 $|N_{V_3}(u_{1,1}^*)\cap N_{V_3}(u_{3,1}^*)|
      =|N_G(u_{1,1}^*)\cap N_G(u_{3,1}^*)|-\sum\limits_{i=1}^{2}|N_{V_i}(u_{1,1}^*)\cap N_{V_i}(u_{1,2}^*)|
      \geq \big(\frac{1}{3}-52\eta\big)n.$
\end{center}
Recall that  $V(G)=V_1\cup V_2\cup V_3$ is a partition of $G$ such that $\sum_{1\leq i<j\leq 3}e(V_i,V_j)$ attains the maximum. Then
\begin{center}
$|N_{V_2}(u_{3,1}^*)|\geq |N_{V_3}(u_{3,1}^*)|\geq |N_{V_3}(u_{3,1}^*)\cap N_{V_3}(u_{1,1}^*)| \geq \big(\frac{1}{3}-52\eta\big)n.$
\end{center}
By Lemma \ref{lemma3.7} (i),  $|N_{V_2}(u_{1,1}^*)|\geq\big(\frac{1}{9}-5\eta\big)n$.
Thus,
\begin{align*}
  |N_{V_2}(u_{1,1}^*)\cap N_{V_2}(u_{3,1}^*)| &\ge |N_{V_2}(u_{1,1}^*)|+ |N_{V_2}(u_{3,1}^*)|-|V_2|\\
       &  \geq \big(\frac{1}{9}-5\eta\big)n+\big(\frac{1}{3}-52\eta\big)n-\big(\frac{1}{3}+\eta\big)n \\
    & =\big(\frac{1}{9}-58\eta\big)n>20\eta n,
\end{align*}
a contradiction. Hence the claim holds.
Since $|N_{V_3}(u_{1,1}^*)\cap N_{V_3}(u_{3,1}^*)|\geq 20\eta n$, there exists a vertex $u_{3,1}\in V_3^1$ adjacent to $u_{3,1}^*$ and $u_{1,1}^*$.
Moreover, since $|N_{V_1}(u_{1,1}^*)\cap N_{V_1}(u_{3,1}^*)|\geq 20\eta n$ and
      $d_{V_1}(u_{3,1})\geq \big(\frac{1}{3}-9\eta\big)n$ by Lemma \ref{lemma3.7} (ii), we have
      \begin{center}
        $|N_{V_1}(u_{1,1}^*)\cap N_{V_1}(u_{3,1}^*)\cap N_{V_1}(u_{3,1})|
      \geq 20\eta n+\big(\frac{1}{3}-9\eta\big)n-\big(\frac{1}{3}+\eta\big)n
      \geq 10\eta n.$
      \end{center}
Hence, there exists a vertex $u_{1,1}\in V_1^1$ adjacent to $u_{3,1}^*,u_{3,1}$ and $u_{1,1}^*$.

\begin{figure}[hpt]
\centering
\begin{tikzpicture}[x=1.00mm, y=1.00mm, inner xsep=0pt, inner ysep=0pt, outer xsep=0pt, outer ysep=0pt]
\path[line width=0mm] (41.09,82.77) rectangle +(78.69,45.99);
\definecolor{L}{rgb}{0,0,0}
\path[line width=0.30mm, draw=L] (70.02,120.23);
\path[line width=0.30mm, draw=L] (75.05,120.06);
\path[line width=0.30mm, draw=L] (80.00,120.00) -- (90.00,110.00);
\path[line width=0.30mm, draw=L] (90.00,110.00) -- (90.00,100.00);
\path[line width=0.30mm, draw=L] (70.00,90.00) -- (80.00,90.00);
\path[line width=0.30mm, draw=L] (80.00,90.00) -- (90.00,100.00);
\draw(91.98,109.47) node[anchor=base west]{\fontsize{14.23}{17.07}\selectfont $u_{1,1}$};
\draw(91.88,98.02) node[anchor=base west]{\fontsize{14.23}{17.07}\selectfont $u_{1,1}^*$};
\draw(80.16,85.55) node[anchor=base west]{\fontsize{14.23}{17.07}\selectfont $u_{2,1}$};
\draw(65.94,122.84) node[anchor=base west]{\fontsize{14.23}{17.07}\selectfont $u_{3,1}^*$};
\path[line width=0.30mm, draw=L] (80.00,120.00) -- (90.00,100.00);
\path[line width=0.30mm, draw=L] (90.00,110.00) -- (80.00,90.00);
\path[line width=0.30mm, draw=L] (70.00,120.00) -- (90.00,110.00);
\definecolor{L}{rgb}{0,0,1}
\definecolor{F}{rgb}{0,0,1}
\path[line width=0.30mm, draw=L, fill=F] (89.62,110.00) circle (1.00mm);
\definecolor{L}{rgb}{0,0,0}
\path[line width=0.30mm, draw=L, fill=F] (90.00,100.00) circle (1.00mm);
\path[line width=0.30mm, draw=L] (75.57,120.58);
\path[line width=0.30mm, draw=L] (89.32,99.97);
\draw(78.91,122.49) node[anchor=base west]{\fontsize{14.23}{17.07}\selectfont $u_{3,1}$};
\draw(50.00,109.47) node[anchor=base west]{\fontsize{14.23}{17.07}\selectfont $v_{2,1}$};
\path[line width=0.30mm, draw=L] (70.00,120.00) -- (80.00,120.00);
\path[line width=0.30mm, draw=L] (60.00,100.00) -- (70.00,90.00);
\path[line width=0.30mm, draw=L] (60.00,110.00) -- (60.00,100.00);
\path[line width=0.30mm, draw=L] (60.00,110.00) -- (70.00,120.00);
\definecolor{F}{rgb}{0,0,0}
\path[line width=0.30mm, draw=L, fill=F] (60.00,110.00) circle (1.00mm);
\draw(50.00,99.47) node[anchor=base west]{\fontsize{14.23}{17.07}\selectfont $v_{1,1}$};
\draw(66.08,85.85) node[anchor=base west]{\fontsize{14.23}{17.07}\selectfont $v_{3,1}$};
\path[line width=0.30mm, draw=L] (70.00,90.00) -- (90.00,100.00);
\path[line width=0.30mm, draw=L] (80.00,90.00) -- (60.00,100.00);
\path[line width=0.30mm, draw=L] (60.00,110.00) -- (70.00,90.00);
\path[line width=0.30mm, draw=L] (60.00,110.00) -- (80.00,120.00);
\path[line width=0.30mm, draw=L] (70.00,120.00) -- (60.00,100.00);
\path[line width=0.30mm, draw=L, fill=F] (80.00,90.00) circle (1.00mm);
\definecolor{L}{rgb}{1,0,0}
\definecolor{F}{rgb}{1,0,0}
\path[line width=0.30mm, draw=L, fill=F] (70.00,90.00) circle (1.00mm);
\definecolor{L}{rgb}{0,0,1}
\definecolor{F}{rgb}{0,0,1}
\path[line width=0.30mm, draw=L, fill=F] (60.00,100.00) circle (1.00mm);
\definecolor{L}{rgb}{1,0,0}
\definecolor{F}{rgb}{1,0,0}
\path[line width=0.30mm, draw=L, fill=F] (70.00,120.00) circle (1.00mm);
\path[line width=0.30mm, draw=L, fill=F] (80.00,120.00) circle (1.00mm);
\end{tikzpicture}
\caption{The subgraph $(C^3)^2$ of $G$ for case $k=2$.}{\label{figure.4}}
\end{figure}
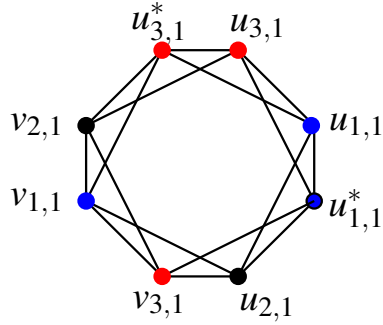

By Lemma \ref{lemma3.7} (iv), there exists a vertex $u_{2,1}\in V_2^1$ adjacent to $u_{1,1}$ and $u_{1,1}^*$.
Using Lemma \ref{lemma3.7} (iv) again, we can recursively select $\widehat{V_i}=\{v_{i,1},v_{i,2},\dots,v_{i,k-1}\}\subseteq V_i^1\setminus \{u_{3,1}^*,u_{3,1},u_{1,1},u_{1,1}^*,u_{2,1}\}$
for each $i\in \{1,2,3\}$ such that all vertices in $\widehat{V_1}$ adjacent to $u_{2,1}$ and $u_{3,1}^*$;
all vertices in $\widehat{V_2}$ adjacent to all vertices in $\widehat{V_1}\cup \{u_{3,1}^*,u_{3,1}\}$;
all vertices in $\widehat{V_3}$ adjacent to all vertices in $\widehat{V_1}\cup \widehat{V_2}\cup\{u_{1,1}^*,u_{2,1}\}$.
Set
 $$C^3=u_{3,1}^*u_{3,1}u_{1,1}u_{1,1}^*u_{2,1}v_{3,1}v_{1,1}v_{2,1}\dots v_{3,k-1}v_{1,k-1}v_{2,k-1}u_{3,1}^*.$$
We can see that $(C^3)^2\cong C_{3k+2}^2$ is a subgraph of $G$ (see $(C^3)^2$ for $k=2$ in Figure \ref{figure.4}),
contradicting the fact that $G$ is $C_{3k+2}^2$-free.
\end{proof}

The following lemmas are used to give a lower bound for
entries of vertices of $G$.

\begin{lem}\label{lemma3.11}
 For each $v\in V(G)$, we have $x_v>\frac{3}{5}x_{u^*}$.
\end{lem}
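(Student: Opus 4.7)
The plan is to argue by contradiction and vertex-switching, in close analogy with the proof of Lemma \ref{lemma3.9}. Suppose there exists $v \in V(G)$ with $x_v \le \tfrac{3}{5} x_{u^*}$. Construct $G'$ from $G$ by deleting all edges incident to $v$ and joining $v$ to every vertex in $I_2^{1} \cup I_3^{1}$, where $I_2^{1}, I_3^{1}$ are the independent sets guaranteed by Lemma \ref{lemma3.8}.

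First, I would verify that $G'$ is still $C_{3k+2}^2$-free. Suppose for contradiction $H \subseteq G'$ with $H \cong C_{3k+2}^2$. Since $G$ is $C_{3k+2}^2$-free, $H$ must employ a new edge, so $v \in V(H)$ and $N_H(v) \subseteq I_2^{1} \cup I_3^{1} \subseteq V_2^{1} \cup V_3^{1}$ with $|N_H(v)| \le 3k+1$. Invoking Lemma \ref{lemma3.7}(iv) with $i_2 = 1$, $\{i_1, i_3\} = \{2,3\}$ (or a direct inclusion--exclusion using Lemma \ref{lemma3.7}(ii)), one finds at least $3k+3$ vertices in $V_1^{1}$ that are adjacent in $G$ to every element of $N_H(v)$; pick one such $u \notin V(H)$. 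Then $(V(H) \setminus \{v\}) \cup \{u\}$ induces a copy of $C_{3k+2}^2$ in $G$, a contradiction.

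Next I would show $\rho(G') > \rho(G)$, contradicting the extremality of $G$. By the Rayleigh quotient applied to the Perron vector $X$ of $G$,
\[
\rho(G') - \rho(G) \;\ge\; X^T\!\bigl(A(G') - A(G)\bigr) X \;=\; 2 x_v \!\left( \sum_{w \in I_2^{1} \cup I_3^{1}} x_w - \rho(G)\, x_v \right).
\]
By Lemma \ref{lemma3.9}, $S = \varnothing$, so inequality (\ref{align7}) gives
$\sum_{w \in I_2^{1} \cup I_3^{1}} x_w > (\rho(G) - 3\eta n)\, x_{v^*} - |W^1| x_{u^*}$. Combining this with $x_{v^*} > \tfrac{2}{3}(1-\eta) x_{u^*}$ from (\ref{align6}), the bounds $|W^1| \le \eta^2 n$ and $\rho(G) \ge \tfrac{2n}{3}$ (Lemma \ref{lemma3.004}), and the choice $\eta < \tfrac{1}{9(120k+48)}$, a short calculation exploiting the slack $\tfrac{2}{3}(1-\eta) - \tfrac{3}{5} > \tfrac{1}{20}$ yields
$\sum_{w \in I_2^{1} \cup I_3^{1}} x_w > \tfrac{3}{5} \rho(G)\, x_{u^*} \ge \rho(G)\, x_v$, so $\rho(G') > \rho(G)$.

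The main obstacles are twofold. First, the $C_{3k+2}^2$-freeness of $G'$ must be checked carefully when $v$ itself lies in $V_2 \cup V_3$ or in $I_2^{1} \cup I_3^{1}$; the replacement vertex $u$ is chosen in $V_1^{1} \setminus V(H)$ so its role is purely topological and the new edges of $G'$ play no part in the resulting $C_{3k+2}^2 \subseteq G$. Second, the numerical comparison is tight: the gap $\tfrac{2}{3}(1-\eta) - \tfrac{3}{5}$ must dominate the error terms $3\eta n/\rho(G)$ and $|W^1|/n$, which is precisely what the smallness assumption on $\eta$ guarantees. Apart from these bookkeeping points, the argument is a direct adaptation of the switching used in Lemma \ref{lemma3.9}.
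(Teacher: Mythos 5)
Your proposal is correct and follows essentially the same route as the paper: assume some vertex has $x_v\le\frac{3}{5}x_{u^*}$, rewire its edges to $I_2^{1}\cup I_3^{1}$, verify $C_{3k+2}^2$-freeness exactly as in Lemma \ref{lemma3.9} via the replacement vertex from Lemma \ref{lemma3.7}(iv), and conclude $\rho(G')>\rho(G)$ from the Rayleigh quotient together with \eqref{align6}, \eqref{align7} and the eigenequation $\sum_{w\in N_G(v)}x_w=\rho(G)x_v$. The numerical margin you identify, $\frac{2}{3}-\frac{3}{5}=\frac{1}{15}$ dominating the $O(\eta)$ error terms, is precisely the one the paper uses.
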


\begin{proof}
Recall that $\rho(G)>\frac{2n}{3}$ and
$|W^1|\leq\eta^2 n$.
Then $\eta n<\frac32 \eta \rho(G)$ and so $|W^1|<\frac32\eta^2\rho(G)$.
Moreover, $S=\varnothing$ by Lemma \ref{lemma3.8}.
Combining these with \eqref{align6} and (\ref{align7}), we obtain that
\begin{center}
$\sum\limits_{v\in I_2^{1}\cup I_3^{1}}x_v
>\big(\rho(G)-3\eta n\big)x_{v^*}-\frac32\eta^2 \rho(G)x_{u^*}>\big(1-5\eta)\rho(G)x_{v^*}. $
\end{center}
Again, from \eqref{align6} we know that $x_{v^*}>\frac{2}{3}\big(1-\eta\big)x_{u^*}.$
Thus, for $n$ sufficiently large,
\begin{center}
 $\sum\limits_{v\in I_2^{1}\cup I_3^{1}}x_v>
\Big(\frac{2}{3}-4\eta\Big)\rho(G)x_{u^*}.$
\end{center}

Now suppose to the contrary, we may assume that there exists $u_0\in V_1$ such that $x_{u_0}\leq\frac{3}{5}x_{u^*}$.
Let $G'$ be the graph obtained from $G$ by deleting edges incident to $u_0$
and joining all edges from $I_2^{1}\cup I_3^{1}$ to $u_0$.
By a similar discussion as in the proof of Lemma \ref{lemma3.8},
we get that $G'$ is $C_{3k+2}^2$-free.
However,
\begin{center}
  $\sum\limits_{v\in I_2^{1}\cup I_3^{1}}x_{v}-\sum\limits_{v\in N_G(u_0)}x_{v}=\sum\limits_{v\in I_2^{1}\cup I_3^{1}}x_v -\rho(G)x_{u_0}>\left(\frac{2}{3}-4\eta-\frac35\right)\rho(G)x_{u^*}>0,$
\end{center}
which leads to that
\begin{center}
  $\rho(G')-\rho(G) \ge  X^T\big(A(G')-A(G)\big)X
                  = 2x_{u_0}\Big(\sum\limits_{v\in I_2^{1}\cup I_3^{1}}x_v-\sum\limits_{v\in N_G(u_0)}x_v\Big)>0,$
\end{center}
contradicting the fact that $G$ is extremal with respect to $spex(n,C_{3k+2}^2)$.
\end{proof}

\begin{lem}\label{lemma3.12}
$|W^5|=1$ and $\sum\limits_{i=1}^{3}\nu_i=1$, where $\nu_i=\nu(G[V_i])$.
\end{lem}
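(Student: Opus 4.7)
The plan is to establish $|W^5| = 1$ and $\sum_{i=1}^3 \nu_i = 1$ (where $\nu_i := \nu(G[V_i])$) in three stages, leveraging the structural machinery built up in the preceding lemmas.

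First I would rule out $\sum_i \nu_i = 0$: in that case every $G[V_i]$ is edgeless, so $G \subseteq K_{|V_1|,|V_2|,|V_3|}$ and thus $\rho(G) \leq \rho(T_{n,3})$. A short quotient-matrix computation for $G(n) = K_1 + T_{n-1,3}$ gives $\rho(G(n)) - \rho(T_{n,3}) \geq c_0 > 0$ for an absolute constant $c_0$ (approximately $5/6$) once $n$ is large, contradicting Lemma~\ref{lemma3.004}. Next I would rule out $\sum_i \nu_i \geq 2$: with two independent matching edges (either both inside one $V_i$, or one in each of two distinct $V_i$'s), I would mirror the cycle constructions in the proofs of Lemmas~\ref{lemma3.8} and~\ref{lemma3.10}, treating the endpoints of the matching edges as the ``defect'' vertices $u_{1,1}^*, u_{1,1}, u_{1,2}^*, u_{1,2}$ and then applying Lemma~\ref{lemma3.7}(iv) recursively to select intermediate $u_{2,j}, u_{3,j}, u_{1,j}$ in $V_j^5$, producing a $(3k+2)$-cycle whose square lies in $G$, contradicting the $C_{3k+2}^2$-freeness.

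Thus $\sum_i \nu_i = 1$; WLOG $\nu_1 = 1$. Since a graph with $\nu = 1$ is either a star or a triangle, $G[V_1] \cong K_{1,t}$ or $K_3$. To conclude, I would show $|W^5| \geq 1$ (which, combined with Lemma~\ref{lemma3.10}, gives $|W^5| = 1$) by contradiction; suppose $|W^5| = 0$. In the triangle case, with triangle $\{a,b,c\} \subseteq V_1$, construct the cycle $a, b, c, w_1, \ldots, w_{3k-1}, a$ placing $w_j$ in $V_2, V_3, V_1$ cyclically (as $j \bmod 3 = 1, 2, 0$); iterated application of Lemma~\ref{lemma3.7}(ii) together with intersecting common neighborhoods lets us pick each $w_j$ realizing every required length-$1$ and length-$2$ adjacency of $C_{3k+2}^2$, yielding $C_{3k+2}^2 \subseteq G$, a contradiction. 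In the star case ($G[V_1] \cong K_{1,t}$), $|W^5| = 0$ forces $t < 32\eta n$, so $G \subseteq G^+ := K_{|V_1|,|V_2|,|V_3|} \cup K_{1,t}$. Extending the star at the center $c$ to all of $V_1\setminus\{c\}$ yields the connected graph $G^{++} := K_1 + K_{|V_1|-1, |V_2|, |V_3|}$; since $t < |V_1|-1$, we have $G^+ \subsetneq G^{++}$, and strict monotonicity of $\rho$ under proper edge-addition on connected graphs gives $\rho(G^+) < \rho(G^{++})$. Because $\rho(K_1 + K_{a,b,c})$ over $a+b+c = n-1$ is maximized at the balanced partition (yielding $G(n)$), we have $\rho(G^{++}) \leq \rho(G(n))$. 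Chaining: $\rho(G) \leq \rho(G^+) < \rho(G^{++}) \leq \rho(G(n))$, again contradicting Lemma~\ref{lemma3.004}.

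The main obstacle is the star case in the third step. A naive subadditivity bound $\rho(A+B) \leq \rho(A)+\rho(B)$ would only give $\rho(G^+) \leq \rho(T_{n,3}) + O(\sqrt{\eta n})$, which is far too weak since the spectral gap $\rho(G(n)) - \rho(T_{n,3})$ is merely an $O(1)$ constant. The subgraph-and-balance chain above sidesteps this: it first injects $G^+$ into the apex-tripartite graph $G^{++}$ using strict edge-monotonicity of $\rho$ on a connected graph, and then appeals to the spectral optimality of the balanced partition among graphs of the form $K_1 + K_{a,b,c}$, the apex analogue of the classical extremality of $T_{n-1,3}$.
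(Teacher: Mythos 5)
Your proposal is correct, but it reorganizes the argument and replaces the paper's key step for $|W^5|\ge 1$ with a different one. Your Stage 2 (ruling out $\sum_i\nu_i\ge 2$ via the two placements of a pair of independent edges, with at most one endpoint in $W^5$ by Lemma~\ref{lemma3.10}, and Lemma~\ref{lemma3.7}(iv) with $\lambda=5$) is essentially the paper's $C^4$ and $C^5$ constructions. The divergence is elsewhere. The paper first proves $|W^5|=1$: if $W^5=\varnothing$, then every vertex has within-part degree below $32\eta n$ and $\nu_i=\nu_i^5\le 1$, so $\sum_i e(G[V_i])\le 99\eta n$; deleting all internal edges and joining one vertex $u_0\in V_1$ to the rest of $V_1$ preserves $C_{3k+2}^2$-freeness and, by the eigenvector lower bound $x_v>\frac{3}{5}x_{u^*}$ of Lemma~\ref{lemma3.11}, strictly increases the spectral radius; no case analysis on the structure of $G[V_1]$ is needed, and $\nu_1\ge1$ then falls out of the fact that the unique vertex of $W^5$ has within-part degree at least $32\eta n$ (so your Stage 1 is not needed in the paper's ordering, though it is needed in yours and your argument for it is fine). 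You instead settle $\sum_i\nu_i=1$ first and then split $G[V_1]$ into the triangle and star cases; this avoids Lemma~\ref{lemma3.11} and the eigenvector estimates entirely, which genuinely simplifies the dependency structure, at the cost of the case split and an extra comparison graph. The one point to tighten is the inequality $\rho(G^{++})\le\rho(G(n))$: the spectral optimality of the balanced partition among graphs of the form $K_1+K_3(a,b,c)$ is exactly the content of the paper's final balancing step in the proof of Theorem~\ref{theorem1.3} and is not available off the shelf at this stage. You can sidestep it: $K_1+K_3(a,b,c)$ is $C_{3k+2}^2$-free because $\chi\bigl(C_{3k+2}^2-\{v\}\bigr)=4$ for every vertex $v$ (Lemma~\ref{lemma2.004}(iii)), so $\rho(G^{++})\le\rho(G)$ by the extremality of $G$, and the chain $\rho(G)\le\rho(G^+)<\rho(G^{++})\le\rho(G)$ already yields the contradiction without any balancing claim.
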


\begin{proof}
We first claim that $|W^5|=1$.
Otherwise, $|W^5|=0$ by Lemma \ref{lemma3.10}.
For every $i\in \{1,2,3\}$, by Lemma \ref{lemma3.8},
we have $\nu_i^5\leq 1$, and hence
    $$\nu_i=\nu(G[V_i])=\nu(G[V_i\setminus (W^{5}\cup S)])=\nu_i^5\leq 1.$$
It is not hard to check that $|W_i^1|\leq \nu_i\leq 1$.
For any vertex $u\in W_i^1$, we can see that $u\notin W_i^5$ as $W_i^5=\varnothing$, which implies that $d_{V_i}(u)\leq 32 \eta n$.
Then we can further obtain that
\begin{align}\label{align20}
e(G[V_i])\leq v_i^1+|W_i^1|\cdot 32 \eta n\leq 33\eta n.
\end{align}

Let $G'$ be the graph obtained from $G$ by deleting all edges in $\bigcup_{i=1}^{3}E(G[V_i])$ and adding all possible edges from $u_0\in V_1$ to all vertices in $V_1\setminus\{u_0\}$.
Then, $G'$ is a subgraph of $K_1+K_{|V_1|-1,|V_2|,|V_3|}$, which implies that $G'$ is $C_{3k+2}^2$-free.
On the other hand, by Lemmas \ref{lemma3.11} and \ref{lemma3.005} and \eqref{align20},
\begin{align*}
\rho(G')-\rho(G)
    &\geq  {X^T(A(G')-A(G))X} \nonumber\\
    &\geq  2\left(\sum_{v\in V_1}x_{u_0}x_{v}-\sum_{vw\in \bigcup_{i=1}^{3}E(G[V_i])}x_vx_w\right) \nonumber\\
  &\geq 2\left(\frac{9}{25}\big(\frac{1}{3}n-\eta n-1\big)-99\eta n\right)x_{u^*}^2>0.
\end{align*}
This contradicts that $G$ is $C_{3k+2}^2$-free.

Since $|W^5|=1$, we may assume that $W_1^5=\{u_{1,1}^*\}$. Then $\nu_1\ge 1$.
We first claim that $\nu_1= 1$.
Suppose to the contrary, and let $u_{1,1}^*u_{1,1},u_{1,2}^*u_{1,2}$ be two independent edges in $G[V_1]$.
One can observe that $u_{1,1},u_{1,2}^*,u_{1,2}\in V_1^5$.
Choose a subset $\widehat{V_1}=\{u_{1,1},u_{1,2},\dots,u_{1,k}\}\subseteq V_1^5\setminus \{u_{1,2}^*\}$.
Setting $\lambda=5$ in Lemma \ref{lemma3.7} (iv), we can recursively select subsets $\widehat{V_i}=\{u_{i,1},u_{i,2},\dots,u_{i,k}\}$ in
$V_i^5$ for each $i\in \{2,3\}$ such that
all vertices in $\widehat{V_2}$ adjacent to all vertices in $\widehat{V_1}\cup \{u_{1,1}^*,u_{1,2}^*\}$;
all vertices in $\widehat{V_3}$ adjacent to all vertices in $\widehat{V_1}\cup \widehat{V_2}\cup\{u_{1,1}^*,u_{1,2}^*\}$.
Set
 $$C^4=u_{1,1}u_{1,1}^*u_{2,1}u_{3,1}u_{1,2}u_{1,2}^*u_{2,2}u_{3,2}\dots u_{1,k}u_{2,k}u_{3,k}u_{1,1}.$$
We can see that $(C^4)^2\cong C_{3k+2}^2$ is a subgraph of $G$, contradicting the fact that $G$ is $C_{3k+2}^2$-free.

Now we shall prove that $\sum_{i=1}^{3}\nu_i=1$.
Otherwise, either $\nu_2= 1$ or $\nu_3= 1$.
We may assume that $\nu_3=1$, and $u_{1,1}^*u_{1,1},u_{3,1}^*u_{3,1}$ are two independent edges in $G[V_1]$ and $G[V_3]$, respectively.
Setting $\lambda=5$ in Lemma \ref{lemma3.7} (iv),
we can recursively select subsets $\widehat{V_i}=\{u_{i,1},u_{i,2},\dots,u_{i,k}\}$ in $V_i^5\setminus \{u_{1,1}^*,u_{3,1}^*\}$
for each $i\in \{1,2,3\}$ such that
all vertices in $\widehat{V_1}$ adjacent to all vertices in $\{u_{3,1}^*,u_{3,1}\}$;
all vertices in $\widehat{V_2}$ adjacent to all vertices in $\widehat{V_1}\cup \{u_{1,1}^*,u_{3,1}^*,u_{3,1}\}$;
all vertices in $\widehat{V_3}$ adjacent to all vertices in $\widehat{V_1}\cup \widehat{V_2}\cup\{u_{1,1}^*\}$.
Set
 $$C^5=u_{1,1}^*u_{1,1}u_{2,1}u_{3,1}^*u_{3,1}u_{1,2}u_{2,2}u_{3,2}\dots u_{1,k}u_{2,k}u_{3,k}u_{1,1}^*.$$
We can see that $(C^5)^2\cong C_{3k+2}^2$ is a subgraph of $G$, contradicting the fact that $G$ is $C_{3k+2}^2$-free.
\end{proof}

Now we complete the proof of Theorem \ref{theorem1.3} as follows.

\begin{proof}
Recall that $G(n)=K_{1}+T_{n-1,3}$ and we shall prove $G\cong G(n)$.
By Lemma \ref{lemma3.12}, we may assume that $W^5=\{u^*\}$.
Furthermore, We shall show that $d_G(u^*)=n-1$.
Suppose to the contrary, then we can find a non-neighbor $v$ of $u^*$ in $G$.
Let $G'=G+\{u^*v\}$. Then $\rho(G')>\rho(G)$.
On the other hand,
by Lemma \ref{lemma3.12} we can find that $G'$ is a subgraph of $K_1+K_3(n_1,n_2,n_3)$, where $|V_i\setminus W^5|=n_i$ for $i\in \{1,2,3\}$.
This indicates that $G'$ is $C_{3k+2}^2$-free,
contradicting the fact that $G$ is extremal with respect to $spex(n,C_{3k+2}^2)$.

Assume without loss of generality that $n_1\geq n_2\geq n_3$.
By Lemma \ref{lemma3.12}, we can see that $G-W^5\subseteq K_{n_1,n_2,n_3}$.
Since $G$ is extremal with respect to $spex(n,C_{3k+2}^2)$, we have $G-W^5\cong K_3(n_1,n_2,n_3)$.
To prove $G\cong G(n)$,
it suffices to show $G-W^5\cong T_{n-1,3}$, that is, $n_1-n_3\le 1$.
Suppose to the contrary that $n_1\geq n_3+2$.
By symmetry, we may assume $x_u=x_i$ for each $u\in V_i\setminus W^5$
and $i\in\{1,2,3\}$. Thus,
\begin{align*}
\rho(G)x_1=n_2x_2+n_3x_3+x_{u^*},
~\rho(G)x_2=n_1x_1+n_3x_3+x_{u^*},
~\rho(G)x_3=n_1x_1+n_2x_2+x_{u^*},
\end{align*}
and $\rho(G)x_{u^*}=n_1x_1+n_2x_2+n_3x_3.$
It follows that
\begin{align}\label{align11}
x_1=\frac{\rho(G)+1}{\rho(G)+n_1}x_{u^*}~~\text{and}
~~x_3=\frac{\rho(G)+1}{\rho(G)+n_3}x_{u^*}.
\end{align}
Choose a vertex $u_0\in V_1\setminus W^5$.
Let $G''$ be the graph obtained from $G$ by deleting edges
from $u_0$ to $V_3\setminus W^5$
and adding all edges from $u_0$ to $V_1\setminus (W^5\cup\{u_0\})$.
Then $G''\cong K_1+K_{n_1-1,n_2,n_3+1}$, and thus $G''$ is still $C_{3k+2}^2$-free.
However, in view of \eqref{align11}, we get
\begin{center}
 $(n_1-1)x_1-n_3x_3=
\frac{(\rho(G)+1)\left((n_1-n_3-1)\rho(G)-n_3\right)}{(\rho(G)+n_1)(\rho(G)+n_3)}x_{u^*}
>0,
$
\end{center}
since $n_1\geq n_3+2$ and $\rho(G)>\frac23n>n_3$.
This leads to that
\begin{center}
 $\rho(G'')-\rho(G)\ge
\sum\limits_{v\in V_1\setminus (W^5\cup\{u_0\})}2x_{u_0}x_v-
   \sum\limits_{v\in V_3\setminus W^5}2x_{u_0}x_v=2x_1\big((n_1-1)x_1-n_3x_3\big)>0,
$
\end{center}
a contradiction.
Therefore, $n_1-n_3\le 1$ and $G\cong K_1+T_{n-1,3}$.
This completes the proof.
\end{proof}

\end{document}